\documentclass[11pt]{article}
\usepackage{geometry}
\usepackage[english]{babel}
\usepackage{longtable, tabularx, booktabs}
\usepackage{amsmath, amsthm, amsfonts, amscd,epsfig,lineno,amssymb, bbm, empheq}
\usepackage{enumitem, cellspace, float, lscape, extarrows}
\usepackage{graphicx, hyperref, cleveref}
\usepackage{fancyhdr, setspace, verbatim, babel}
\usepackage{multirow}
\usepackage{cite,epic}
\usepackage{adjustbox,makecell,float}
\usepackage[table]{xcolor}

\newtheorem{thm}{Theorem}[section]
\newtheorem{lemma}{Lemma}[section]

\newtheorem{con}{Construction}[section]

\crefname{lemma}{Lemma}{Lemmas}
\crefname{defi}{Definition}{Definitions}
\crefname{col}{Corollary}{Corollaries}
\crefname{thm}{Theorem}{Theorems}
\crefname{example}{Example}{Examples}
\crefname{problem}{Problem}{Problems}
\crefname{table}{Table}{Tables}
\crefname{pro}{Proposition}{Propositions}

\newcolumntype{M}{>{$}c<{$}}
\newcolumntype{N}{>{$}X<{$}}

\usepackage{caption}
\newcount\refno
\usepackage{cite}
\numberwithin{equation}{section}
\title{Symmetric Quantum Latin Squares with Maximal Cardinality}
\author{\footnotesize Yiwen Zhang, Haitao Cao\\
\scriptsize School of Mathematical Sciences, Ministry of Education Key Laboratory for NSLSCS,\\
\scriptsize Nanjing Normal University, Nanjing 210023, China}
\date{}
\begin{document}
\maketitle
\begin{abstract}
In this paper, we prove that  there exist a symmetric quantum Latin square with maximal cardinality of order $n$ for all $n\geq 6$, and a symmetric idempotent quantum Latin square with maximal cardinality of order $4k+2$ for all $k\geq 1$.\\
\textbf{Keywords:}  quantum Latin square, symmetric quantum Latin square, cardinality, Hadamard matrix, conference matrix.
\end{abstract}

\section{Introduction}

Quantum Latin squares are a generalization of classical Latin squares in quantum field, introduced by Musto and Vicary \cite{2016}. Quantum Latin squares have significant applications and connections to mutually unbiased bases\cite{B}, quantum codes\cite{D} and $k$-uniform states\cite{Y}, etc.

A {\it quantum Latin square} of order $n$, denoted $QLS(n)$, is an $n \times n$ array whose entries are vectors in $\mathcal{H}_n$, such that the entries in each row and each column form an orthonormal basis of $\mathcal{H}_n$. A $\text{QLS}(n)$ can be obtained by replacing each entry $i\in [n]=\{0,1,\dots,n-1\}$ in a classical Latin square with the computational basis vector $|i \rangle \in \mathcal{H}_n$, where $|i \rangle$ is a unit column vector with its $(i +1)$-th component equal to $1$.
Let  $L(i,j)$ denote the entry in the $i$-th row and $j$-th column of a quantum Latin square $L$. If  $L(i,j)=L(j,i)$ for all $i,j\in [n]$, then $L$ is said to be {\it symmetric}, denoted by $SQLS$.

Suppose $|u\rangle$, $|v\rangle\in\mathcal{H}_n$ are unit vectors. $|u\rangle$ and $|v\rangle$ are {\it identical}, denoted by $|u\rangle=|v\rangle$, if there exists $\theta\in [0,2\pi)$ such that $|u\rangle=e^{\mathrm{i}\theta}|v\rangle$, where $\mathrm{i}$ is the imaginary unit. Otherwise, they are   {\it distinct} ($|u\rangle\not= |v\rangle$). The {\it cardinality} of a $QLS(n)$ $L$, denoted by $C(L)$, is the number of distinct entries in $L$. Obviously, $C(L)\in [n,n^2]\setminus \{n+1\}$. If $L$ consists of the computational basis $\{|0\rangle,|1\rangle,\dots,|n-1\rangle\}$ then $L$ is called {\it classical}. If $L$ is a quantum Latin square and $C(L)=n^2$, then $L$ has {\it maximal cardinality}, denoted by $MCQLS$. A {\it transversal} of $L$ refers to a set of $n$ elements, each located in a distinct row and a distinct column, forming an orthonormal basis of $\mathcal{H}_n$.

 Zhang et al. \cite{Z,Z2,tian} established the existence of an $MCQLS(n)$ for all $n\geq 4$. For more results on all possible cardinalities of quantum Latin squares, see \cite{P,J1,J2,ZYW,Z1}.

\begin{thm}\rm{(\cite{Z,Z2,tian})}\label{thm:1.0}
There exists an $MCQLS(n)$ for all $n\geq 4$.
\end{thm}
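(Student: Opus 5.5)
The idea is to build an $MCQLS(n)$ by perturbing a classical Latin square through its orthogonal decompositions. We split into cases by the arithmetic of $n$ and use one flexible gadget: a $2\times 2$ ``rotation'' block. Take a classical Latin square containing an intercalate, i.e.\ cells $(a,c),(a,d),(b,c),(b,d)$ holding $i,j,j,i$. Replace these entries by
\[
\cos\theta|i\rangle+\sin\theta|j\rangle,\quad -\sin\theta|i\rangle+\cos\theta|j\rangle,
\]
and their swaps. Rows $a,b$ and columns $c,d$ remain orthonormal bases for every $\theta$, so intercalates give free parameters.

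To reach $n^2$ distinct entries, we need every cell to carry a generic vector. Intercalates alone cannot do this: they only mix vectors within a 2-dimensional subspace. I would therefore use a product construction.

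\textbf{Product step (composite $n$).} Suppose $n=pq$ and we have an $MCQLS(p)$ $A$ and an $MCQLS(q)$ $B$. Form the array with entries
\[
L((r,r'),(s,s'))=U_{r',s'}\,\bigl(A(r,s)\otimes B(r',s')\bigr),
\]
where each $U_{r',s'}$ is a unitary. The key point is to choose these unitaries so that distinctness is preserved. The simplest choice is $U_{r',s'}=V_{s'}\otimes I$ with generic $V_{s'}$. Then within a fixed column the $V$ is fixed, so orthonormality holds. Within a row it also holds, because distinct $s'$ give orthogonal $B$ components regardless of $V$. Distinctness follows from genericity of the $V_{s'}$ together with maximal cardinality of $A$ and $B$.

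\textbf{Prime and small cases.} For $n=4$ and $n=5$, and for the other base cases, I would give explicit constructions. Start from the cyclic Latin square $L(i,j)=|i+j\bmod n\rangle$. Choose a column-dependent unitary $W_j$ that is diagonal in the Fourier basis, and let the entries be $W_j|i+j\rangle$. Within a column these are orthonormal. The difficulty is the rows, which need $\{W_j|i+j\rangle\}_j$ orthonormal for each $i$. This is easiest to arrange by using Fourier-basis and computational-basis vectors together, in the spirit of mutually unbiased bases.

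Alternatively, the general $n\ge 4$ can be handled by the following direct route, which avoids primes entirely.
\begin{enumerate}
\item Take an orthogonal decomposition of $\mathcal{H}_n$ into 2- and 3-dimensional blocks.
\item Build a classical Latin square whose subsquares align with these blocks.
\item Fill each $2\times 2$ or $3\times 3$ subsquare block with rotated bases, using an independent generic unitary per cell-block.
\end{enumerate}
Entries from different blocks then lie in different subspaces or differ by generic rotations. Handling the cells outside the subsquares requires a second-level rotation, and this is where I expect the main obstacle: showing that a consistent generic choice exists. I would argue this via algebraic-variety dimension counting. Each coincidence $|u\rangle=e^{i\theta}|v\rangle$ is a proper Zariski-closed condition on the parameter space, provided one exhibits a single parameter point avoiding it. A finite union of proper closed sets cannot cover the connected parameter manifold, so a generic choice gives $C(L)=n^2$.
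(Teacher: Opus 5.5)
\textbf{Gap: the base cases, where all the work lies, are never constructed.} The paper gives no proof of this statement; it is quoted from the cited works. So I judge your proposal on its own.

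Your product step is sound. The twist $V_{s'}$ is not even needed: for unit vectors, $x\otimes y$ and $x'\otimes y'$ are identical only if $x,x'$ are identical and $y,y'$ are identical, so $A\otimes B$ is already an $MCQLS(pq)$.

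The problem is what the product step leaves over. There is no $MCQLS(2)$ or $MCQLS(3)$. A $QLS(2)$ has only two distinct entries, and a short case check shows every $QLS(3)$ is classical up to a unitary and phases. Hence the product step only reaches $n=pq$ with $p,q\ge 4$, both already handled. Everything else must be built directly: every prime $p\ge 5$, every order $2p$ and $3p$, and sporadic orders such as $4,8,12,18,27$. For these you only say ``I would give explicit constructions''.

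Your fallback route fails as described. Suppose the four cells of an aligned $2\times 2$ subsquare (rows $a,b$, columns $c,d$) carry vectors in a fixed $2$-dimensional block. Then orthogonality inside that block forces $L(b,c)$ to be identical to $L(a,d)$, and $L(b,d)$ to $L(a,c)$. The same collapse happens in an aligned $3\times 3$ subsquare, since every $QLS(3)$ is classical. So every member of your family has repeated entries. The coincidence loci are therefore not proper subsets, and the Zariski/genericity argument cannot start; all the difficulty is pushed into the unspecified ``second-level rotation''. Even with a good family, genericity only reduces the problem to exhibiting, for each pair of cells, one $QLS(n)$ in a connected analytic family where those two entries differ. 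That exhibition is exactly what is missing.

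\textbf{Your Fourier idea can be completed for primes.} Write $d_{j,k}$ for the eigenvalues of $W_j$. In Fourier-basis coordinates, which is a global unitary change, $W_j|i+j\rangle=n^{-1/2}(m_{kj}\omega^{-ik})_k$ with $m_{kj}=d_{j,k}\omega^{-jk}$. The rows are orthonormal exactly when $M=(m_{kj})$ is a Hadamard matrix, so this is Construction 2.1 with one factor the Fourier matrix.

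Now take $M$ to be the Fourier matrix with its rows permuted by $\pi$. This gives $L(i,j)_k=p^{-1/2}\omega^{\pi(k)j-ik}$. An identification $L(i,j)=L(i',j')$ forces $k\mapsto (j-j')\pi(k)-(i-i')k$ to be constant mod $p$. For $j=j'$ this gives $i=i'$; for $j\neq j'$ it makes $\pi$ affine. Affine permutations of $\mathbb{Z}_p$ number $p(p-1)<p!$ for $p\ge 5$, so a non-affine one exists and gives an $MCQLS(p)$. Zero divisors in $\mathbb{Z}_n$ break this argument for composite $n$, so the orders $2p$, $3p$ and the sporadic ones still need their own constructions.
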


 For a symmetric quantum Latin square $L$, the maximum possible cardinality is $\frac{n(n+1)}{2}$. Therefore if  $C(L)=\frac{n(n+1)}{2}$, then $L$ has maximal cardinality, denoted by $MCSQLS$. It is easy to prove that every $SQLS(n)$ $L$ with $n\le 5$ satisfies $C(L)=n$. If the main diagonal of a $QLS(n)$ $L$ is a transversal, then $L$ is said to be {\it idempotent}. An idempotent $MCSQLS$ is denoted by $MCISQLS$. In this paper we establish the following existence theorems.

\begin{thm}\label{thm:1.1}
There exists an $MCSQLS(n)$ for all $n\geq 6$.
\end{thm}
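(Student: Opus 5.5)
Since the abstract mentions Hadamard and conference matrices, I would build the squares directly, in the spirit of the constructions behind \Cref{thm:1.0}, and add a recursive step to cover every $n\ge 6$.

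\textbf{Base gadget.} A symmetric classical Latin square of order $n$ is a symmetric matrix whose cells carry computational basis vectors. I would fix one and choose a partition of the cells into symmetric blocks: pairs $\{(i,j),(j,i)\}$ with $i\neq j$, plus diagonal cells. Within a block, the basis vectors in a row-span $S$ (say $|a\rangle,|b\rangle$) get rotated into a different orthonormal basis of $\mathrm{span}(S)$. The intended gadget is a $2\times 2$ symmetric subsquare $\begin{pmatrix} a&b\\ b&a\end{pmatrix}$ on rows $\{i,j\}$ and columns $\{k,l\}$, with $\{i,j\}=\{k,l\}$ or placed symmetrically with its transpose. I would replace its entries by
\[
\begin{pmatrix} u & v\\ v & w\end{pmatrix},
\]
where $u\perp v$, $v\perp w$ and all lie in $\mathrm{span}\{|a\rangle,|b\rangle\}$. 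Real vectors would force $w=\pm u$, so I would use complex phases instead.

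A single $2$-dimensional subspace allows only a few distinct vectors. To get $n(n+1)/2$ distinct entries, I would instead use larger orthonormal systems. For row $i$, the entries $L(i,j)$ should be rows of a unitary matrix chosen so that the matrix $L$ is symmetric. Concretely, I would look for $L(i,j)=\sum_k U_{ij}^{(k)}|k\rangle$ built from a symmetric Latin square $M$ and a family of symmetric "weighing" blocks. When $n=2m$, pair symbols $\{2s,2s+1\}$ and take $M$ as a symmetric Latin square over $\mathbb{Z}_m$ blown up by $2\times 2$ blocks. Each block (the symbol pair $s$ placed at block position $(p,q)$ with $p\le q$) receives a $2\times 2$ unitary $H_{pq}$ whose rows and columns span the pair. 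This needs $H_{qp}=H_{pq}^{T}$, and the overall square is symmetric if the diagonal blocks $H_{pp}$ are symmetric unitaries.

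\textbf{Counting distinctness.} The $\binom{n}{2}+n$ cells of the upper triangle must give pairwise non-proportional vectors. Vectors in different symbol-pair subspaces are automatically distinct, except that I must avoid landing on a basis vector. So within each pair $s$, I would choose all unitaries $H_{pq}$ used by $s$, namely $m$ blocks, hence up to $4m$ entries after symmetry, to have pairwise non-proportional rows. Generic choices $H=\begin{pmatrix}\cos\theta & e^{i\phi}\sin\theta\\ -e^{-i\phi}\sin\theta&\cos\theta\end{pmatrix}$, suitably symmetrized, with distinct parameters should work. The catch is that diagonal blocks must be symmetric unitaries, and a symmetric $2\times2$ unitary with $(1,1)=(2,2)$ entries yields repeated directions. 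I would handle this by using $3\times3$ or conference-matrix-based diagonal blocks instead, which is where conference matrices of order $n\equiv 2\pmod 4$ enter for the idempotent case.

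\textbf{Covering all $n$.} For odd $n$, or when $2\times 2$ blocks leave coincidences, I would take a symmetric Latin square with a symmetric $3\times3$ subsquare or a transversal-diagonal structure and use a $3$-dimensional gadget, for example one built from Fourier matrices. The case split would be: $n$ even $\ge 6$ from the $2$-blocking with $m\ge3$; $n$ odd $\ge 7$ from $(n-3)/2$ pair-blocks plus one triple block; small cases such as $n=6,7$ by explicit computer-found examples.

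The main obstacle is the diagonal. There the symmetry constraint leaves only $n$ cells, not pairs, so keeping all $n(n+1)/2$ entries distinct, especially the diagonal vectors versus their off-diagonal partners in the same subspace, is the delicate part. I would first try a fully complex parametrization and verify distinctness by showing that each coincidence is a nontrivial polynomial condition on the parameters.
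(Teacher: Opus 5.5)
There is a genuine gap, and it lies in the base gadget itself, not only on the diagonal. In your $2\times 2$ blow-up, the four cells of the block at position $(p,q)$ hold vectors in $V_s=\mathrm{span}\{|2s\rangle,|2s+1\rangle\}$. The other blocks in the same rows and columns live in the other pair spaces, so the two entries of each block row and each block column must form an orthonormal basis of $V_s$. But in a $2$-dimensional space, $u\perp v$ and $w\perp v$ force $w=e^{\mathrm{i}\theta}u$. Passing to complex vectors only replaces your $w=\pm u$ by $w=e^{\mathrm{i}\theta}u$, which is still the same entry under the paper's definition of identical vectors.

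Hence every block has the form $\begin{pmatrix}u&v\\ e^{\mathrm{i}\phi}v&e^{\mathrm{i}\theta}u\end{pmatrix}$ and contains only two distinct entries. An off-diagonal block with $p<q$ fills four upper-triangle cells with two vectors, and a diagonal block fills three cells with two vectors. So your square has at most $2\cdot\frac{m(m+1)}{2}=m(m+1)$ distinct entries, which is less than $\frac{n(n+1)}{2}=m(2m+1)$ for every $m$. This is a dimension count, so no generic choice of $\theta,\phi$ can repair it.

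The $3$-dimensional gadget for odd $n$ fails even more badly. After a unitary change of basis of the subspace, the first row of a $3\times 3$ block is $|0\rangle,|1\rangle,|2\rangle$. The column constraints then force every other entry to be a basis vector up to phase, so every $QLS(3)$ has cardinality $3$.

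More generally, a blow-up of a symmetric Latin square by $d\times d$ blocks, $d\ge 2$, can reach maximal cardinality only if each off-diagonal block is itself an $MCQLS(d)$ and each diagonal block an $MCSQLS(d)$, all pairwise disjoint. Since every $SQLS(d)$ with $d\le 5$ has cardinality $d$, this forces $d\ge 6$. The missing idea is therefore a direct, non-block construction of base squares of order at least $6$.

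The paper obtains these base squares in two ways:
\begin{itemize}
\item It sets $L(i,j)_k=h_{ki}h_{kj}/\sqrt{n}$ for a Hadamard matrix $H$ (Construction~\ref{con:2.1} and Lemma~\ref{lemma:2.1}). This gives $n=8$ (Lemma~\ref{lemma:2.2}) and all primes $p\ge 7$ via Bj\"orck's circulant matrices (Theorem~\ref{thm:2.2}).
\item It borders symmetric complex conference matrices with basis vectors, telling entries apart by the positions of their two zero coordinates (Construction~\ref{con:2.3} and Lemma~\ref{lemma:2.5}). This gives $6,9,10,15,25$.
\end{itemize}
Only then does it use blocks $|a_{ij}\rangle\otimes C_{ij}$ of order $m\ge 6$, with pairwise disjoint $MCQLS(m)$s and two disjoint $MCSQLS(m)$s (Lemmas~\ref{lemma:3.3} and~\ref{lemma:3.4}), and it finishes by factoring $n$. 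Your plan has no construction of this kind, and leaving $n=6,7$ to unspecified computer searches does not supply one.
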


\begin{thm}\label{thm:1.2}
There exists an $MCISQLS(n)$ for any $n\equiv 2\pmod{4}$ and $n\geq 6$.
\end{thm}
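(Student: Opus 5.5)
We construct the squares from symmetric conference matrices, as the keywords (Hadamard matrix, conference matrix) suggest. The target is Theorem~\ref{thm:1.2}: for $n\equiv 2\pmod 4$ we want a symmetric $QLS(n)$ whose main diagonal is a transversal and which has $\frac{n(n+1)}{2}$ pairwise distinct entries.

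\textbf{Step 1: a classical symmetric idempotent skeleton.} We need a symmetric Latin square of order $m=n/2$ (odd) whose diagonal is a transversal. For odd $m$ the square $i+j \pmod m$ is symmetric, and its diagonal $2i$ is a transversal. From this skeleton we build a block structure: replace each symbol by a $2$-dimensional subspace, so that row $i$ and column $j$ of the final array live in a decomposition of $\mathcal{H}_n$ into $m$ orthogonal $2$-dimensional blocks.

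\textbf{Step 2: fill each block.} Cell $(i,j)$ of the skeleton becomes a $2\times 2$ block of the $n\times n$ array. Its four entries are vectors in the subspace $V_{s}$ attached to symbol $s=i+j$.
\begin{itemize}
\item For an off-diagonal pair $\{i,j\}$, the block at $(i,j)$ and its mirror at $(j,i)$ must be transposes of each other. Inside the block, each row must be an orthonormal basis of $V_s$, and so must each column.
\item The simplest choice is a $2\times2$ array $\begin{pmatrix} a & b\\ b^{\perp} & a^{\perp}\end{pmatrix}$, where $\{a,b^\perp\}$ and $\{b,a^{\perp}\}$ are orthonormal bases of $V_s$. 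For example, take $b^\perp \perp a$ and $a^\perp\perp b$. Choosing $a,b$ with $0<|\langle a,b\rangle|<1$ gives four pairwise distinct vectors.
\item The diagonal blocks $(i,i)$ must themselves be symmetric $2\times2$ arrays. Their diagonal entries must lie in the transversal structure. A symmetric $2\times 2$ quantum Latin square $\begin{pmatrix} x& y\\ y& x'\end{pmatrix}$ forces $x=x'$ up to phase (this is why $SQLS(2)$ is classical). So each diagonal block contributes $2$ distinct vectors, not $3$.
\end{itemize}
Counting: we need $\frac{n(n+1)}{2}$ distinct entries in total, so all entries must be distinct except the forced mirror pairs. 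A naive block count loses one entry per diagonal block, giving $m$ too few, so the naive block construction fails. We therefore refine it. Merge the $2$-dimensional blocks using a unitary mixing across blocks: conjugate the whole array by a global change of basis built from a symmetric conference matrix $C$ of order $n$. Such a $C$ exists for many $n\equiv 2\pmod 4$ (Paley, $n-1$ a prime power). It satisfies $C^T=C$, $C^2=(n-1)I$, and has zero diagonal. Using $C+ \mathrm{i}I$, which is a scaled unitary, to rotate selected rows makes the diagonal entries generic, while the conference structure keeps orthogonality and symmetry.

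\textbf{Step 3: distinctness.} This is the main obstacle. We must verify that no two of the $\frac{n(n+1)}{2}$ entries are parallel. To do this we introduce free phase or angle parameters in each block and show that the coincidence conditions form finitely many proper algebraic subvarieties, so a generic choice avoids all of them. Parallelism between vectors lying in different orthogonal blocks is impossible, so only intra-block and diagonal coincidences need checking. Since the conference-matrix route only covers some $n$, for the remaining orders $n\equiv 2\pmod 4$ we would instead rely on a recursive or direct-product construction combining small base MCISQLS with Theorem~\ref{thm:1.0}-type ingredients. Making this last reduction cover all $n$ is where we expect the real difficulty.
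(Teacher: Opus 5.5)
There is a genuine gap: as written, the proposal does not yield a valid square for any $n$.

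\textbf{Steps 1--2 fail inside the blocks.} The $m$ blocks in a block-row lie in $m$ mutually orthogonal $2$-dimensional spaces. So, as you say, each row and each column of a $2\times 2$ block must be an orthonormal basis of $V_s$. The first row then forces $a\perp b$, which contradicts your choice $0<|\langle a,b\rangle|<1$. In fact every $QLS(2)$ has only two distinct entries: $b^\perp$ and $b$ are both orthogonal to $a$, so $b^\perp\parallel b$, and likewise $a^\perp\parallel a$. Hence any such block array has at most $2m+2\binom{m}{2}=m(m+1)$ distinct entries. That falls short of $\frac{n(n+1)}{2}=m(2m+1)$ by $m^2$, not by $m$. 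Also, in each diagonal block the two diagonal entries are parallel, so the main diagonal is not even a transversal.

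\textbf{The repair via $C+\mathrm{i}I$ changes nothing.} If $(C+\mathrm{i}I)/\sqrt{n}$ is applied to every entry, it is a global unitary. It preserves all inner products, hence all parallel pairs, all orthogonalities and the cardinality (Lemma~\ref{lemma:U}). If it is applied only to selected rows, the symmetry $L(i,j)=L(j,i)$ and the orthonormality of the columns are destroyed, and nothing in the proposal restores them. So Step~3 has no valid family on which to run a genericity argument. Such an argument would in any case need, for each coincidence condition, an explicit member of the family avoiding it, which is the whole difficulty. Separately, your $C$ must be a real symmetric conference matrix. These exist only when $n-1$ is a sum of two squares (not, e.g., for $n=22$), and you leave the remaining orders open.

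\textbf{The missing idea is a genuine base case.} Your Steps 1--2 are exactly the direct product of Lemma~\ref{lemma:3.4} with base order $2$, and order $2$ is hopeless for the reasons above. The paper instead builds an $MCISQLS(2p)$ for every odd prime $p$ (Theorem~\ref{thm:2p}), which settles $n=2p$ directly.
\begin{itemize}
\item Take the Butson-type Hadamard matrix $H$ of order $2p$ over $p$-th roots of unity from Theorem~\ref{thm:H}. Set $L(i,j)_k=h_{ki}h_{kj}/\sqrt{2p}$, i.e.\ Construction~\ref{con:2.1} with $M=F=H$; this is automatically a symmetric $QLS$.
\item Distinctness of the upper-triangular entries comes from comparing exponents of $w$ modulo $p$. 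For instance, when all indices lie in $[p]$, a coincidence forces $i_1+j_1\equiv i_2+j_2$ and $i_1^2+j_1^2\equiv i_2^2+j_2^2$.
\item Idempotency means the entrywise square of $H$ has orthogonal columns. The paper checks this by cancelling quadratic Gauss sums via $G(ab)=\chi_p(a)G(b)$.
\end{itemize}
Then for $n=2k_1k_2$ with $k_1$ an odd prime and $k_2\ge 3$ odd, Lemma~\ref{lemma:3.4} takes a symmetric idempotent Latin square $A$ of order $k_2$ and fills its blocks as follows.
\begin{itemize}
\item Every diagonal block is $|a_{i,i}\rangle\otimes B_1$, with $B_1$ an $MCISQLS(2k_1)$.
\item Off the diagonal it uses $|a_{i,j}\rangle\otimes C_{i,j}$ below and $|a_{i,j}\rangle\otimes C_{j,i}^T$ above. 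The $C_{i,j}$ are $MCQLS(2k_1)$s, pairwise disjoint and disjoint from $B_1$ (Lemmas~\ref{lemma:3.2} and~\ref{lemma:3.3}).
\end{itemize}
This covers all $n\equiv 2\pmod 4$, $n\ge 6$, without any conference matrices.
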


\section{Constructions from Hadamard matrices and conference matrices}

A {\it Hadamard matrix} of order $n$, denoted by $H$, is an $n\times n$ matrix with unimodular entries satisfying $HH^\dagger=nE_n$ where $H^\dagger$ is the conjugate transpose of $H$, $E_n$ is the identity matrix of order $n$. If the entries of $H$ in the first row and column are all $1$, then $H$ is called {\it dephased}. For a catalogue of complex Hadamard matrices, readers can refer to the online database https://chaos.if.uj.edu.pl/karol/hadamard/index.html and \cite{H1}; for recent research of Hadamard matrices, see \cite{H2} and \cite{H3}.


\begin{con}\rm{(\cite{MF})}\label{con:2.1}
Let $M = (m_{ij})$ and $F=(f_{ij})$, $i,j\in[n]$, be two Hadamard matrices of order $n$. Let
\begin{equation}\label{eq:12}
L(i,j)=\frac{1}{\sqrt{n}}
\begin{pmatrix}
m_{0i}f_{0j}\\
m_{1i}f_{1j}\\
\vdots\\
m_{(n-1)i}f_{(n-1)j}
\end{pmatrix}.
\end{equation}
Then $L$ is a $QLS(n)$. Furthermore if $M=F$, $L$ is symmetric.
\end{con}

\begin{lemma}\label{lemma:2.1}
If there exists a Hadamard matrix $H = (h_{ij})$ of order $n$ satisfying the following condition: for all $0\le i_1\le j_1\le n-1$, $0\le i_2\le j_2\le n-1$, there exist $r_1\neq r_2$, such that
\begin{equation}\label{eq:2}
\frac{h_{r_1i_1}h_{r_1j_1}}{h_{r_1i_2}h_{r_1j_2}}\neq\frac{h_{r_2i_1}h_{r_2j_1}}{h_{r_2i_2}h_{r_2j_2}},
\end{equation}
then there exists an $MCSQLS(n)$.
In particular, if $H$ is dephased, then the condition above can be simplified to: for all $0\le i_1\le j_1\le n-1$, $0\le i_2\le j_2\le n-1$, there exists $r$ such that
\begin{equation}\label{eq:3}
h_{ri_1}h_{rj_1}\neq h_{ri_2}h_{rj_2}.
\end{equation}
\end{lemma}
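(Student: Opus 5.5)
The plan is to apply Construction~\ref{con:2.1} with $M=F=H$ and then translate ``two entries are identical'' into the ratio condition \eqref{eq:2}. With $M=F=H$, Construction~\ref{con:2.1} gives
\[
L(i,j)=\frac{1}{\sqrt n}\,(h_{0i}h_{0j},h_{1i}h_{1j},\dots,h_{(n-1)i}h_{(n-1)j})^{T},
\]
which is a symmetric $QLS(n)$. Since $L(i,j)=L(j,i)$, the distinct entries of $L$ can only come from the positions $(i,j)$ with $0\le i\le j\le n-1$. There are exactly $\frac{n(n+1)}{2}$ such positions. So it suffices to show that two different such positions $(i_1,j_1)\neq(i_2,j_2)$ always carry distinct vectors. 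The condition in the statement is meant for pairs $(i_1,j_1)\ne(i_2,j_2)$; for equal pairs it trivially fails.

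\textbf{Step 1: characterise when two entries are identical.} Suppose $L(i_1,j_1)=e^{\mathrm{i}\theta}L(i_2,j_2)$. Every entry $h_{ri}$ is unimodular, hence nonzero. Comparing coordinates then gives
\[
\frac{h_{ri_1}h_{rj_1}}{h_{ri_2}h_{rj_2}}=e^{\mathrm{i}\theta}\quad\text{for every } r\in[n].
\]
So the ratio is independent of $r$. Conversely, if this ratio is the same unimodular number $c$ for all $r$, then $L(i_1,j_1)=c\,L(i_2,j_2)$, and the two entries are identical. Hence $L(i_1,j_1)\neq L(i_2,j_2)$ exactly when there exist $r_1\neq r_2$ satisfying \eqref{eq:2}.

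\textbf{Step 2: conclude maximal cardinality.} Under the hypothesis, all $\frac{n(n+1)}{2}$ entries indexed by $i\le j$ are pairwise distinct. Therefore $C(L)=\frac{n(n+1)}{2}$, and $L$ is an $MCSQLS(n)$.

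\textbf{Step 3: the dephased case.} If $H$ is dephased, then $h_{0i}=1$ for all $i$, so the ratio in \eqref{eq:2} at $r=0$ equals $1$. The ratio is therefore non-constant in $r$ if and only if some row $r$ gives a ratio different from $1$. This is the same as $h_{ri_1}h_{rj_1}\neq h_{ri_2}h_{rj_2}$, which is \eqref{eq:3}. For the forward direction, take $r_1=0$ and $r_2=r$. For the converse, if $r_1\neq r_2$ have different ratios, at least one of these ratios differs from $1$, and that row serves as $r$.

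No step is a real obstacle. The only point needing care is Step~1: the identification of vectors up to a global phase must match exactly the constancy of the coordinatewise ratio, and this uses that all entries of $H$ are nonzero and unimodular.
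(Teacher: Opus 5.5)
Your proof is correct and follows the paper's argument: apply Construction~\ref{con:2.1} with $M=F=H$, observe that $L(i_1,j_1)=c\,L(i_2,j_2)$ forces $\frac{h_{ki_1}h_{kj_1}}{h_{ki_2}h_{kj_2}}=c$ for every $k$, which contradicts \eqref{eq:2}, and note that $c=1$ when $H$ is dephased (because row $0$ is all ones), which contradicts \eqref{eq:3}. You are slightly more careful than the paper: you restrict to positions $i\le j$ with $(i_1,j_1)\neq(i_2,j_2)$ to get exactly $\frac{n(n+1)}{2}$ distinct entries, and you check both directions of the equivalence in the dephased case.
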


\begin{proof}
Apply \cref{con:2.1} with $M=F=H$ to obtain an $SQLS(n)$ $L$. Now we show that any two entries of $L$ are distinct. Otherwise, $L(i_1,j_1)=L(i_2,j_2)$, then $h_{ki_1}h_{kj_1}=ch_{ki_2}h_{kj_2}$ holds for all $k\in [n] $, where $c$ is a unit complex number, which implies $\frac{h_{ki_1}h_{kj_1}}{h_{ki_2}h_{kj_2}}=c$. This contradicts (2.2). In particular, if $H$ is dephased, then $c=1$ and it contradicts  (\ref{eq:3}). So $L$ has maximal cardinality. 
\end{proof}

\begin{lemma}\label{lemma:2.2}
There exists an $MCSQLS(8)$.
\end{lemma}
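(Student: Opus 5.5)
The plan is to apply \cref{lemma:2.1} to a suitably chosen \emph{dephased} complex Hadamard matrix $H$ of order $8$. We then only have to check condition (\ref{eq:3}): the $36$ vectors $v_{ij}=(h_{ri}h_{rj})_{r\in[8]}$ with $0\le i\le j\le 7$ must be pairwise distinct.

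\textbf{Choice of $H$.} Real Hadamard matrices and the Fourier matrix are useless here. For Sylvester-type matrices the columns form a group under entrywise product, and for $F_8$ we have $v_{ij}$ depending only on $i+j \bmod 8$. Both produce many coincidences. So I would introduce free phases.

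Let $A=F_4$ be the $4\times 4$ Fourier matrix. Let $B=F_4D$ with $D=\mathrm{diag}(1,a,b,c)$ and $a,b,c$ unimodular parameters. Set
\[
H=\begin{pmatrix} A & A\\ B & -B\end{pmatrix}.
\]
Since $A$ and $B$ are Hadamard of order $4$, the block rows are orthogonal: $AB^\dagger-AB^\dagger=0$, $2AA^\dagger=8E_4$ and $2BB^\dagger=8E_4$. Hence $HH^\dagger=8E_8$. Moreover $H$ is dephased, because the first row of $A$ and the first columns of $A$ and $B$ are all ones. If this family turns out to be too rigid, I would fall back on a generic member of a larger known affine family of order-$8$ complex Hadamard matrices (e.g.\ the five-parameter family through $F_8$). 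The verification scheme below stays the same.

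\textbf{Verification.} For each of the $\binom{36}{2}=630$ pairs $\{(i_1,j_1),(i_2,j_2)\}$, the coincidence $v_{i_1j_1}=v_{i_2j_2}$ is a system of monomial equations in $a,b,c$ and fourth roots of unity. I would sort the pairs by which halves (first four or last four columns) the indices $i_1,j_1,i_2,j_2$ lie in.
\begin{itemize}
\item \emph{Compare rows $r<4$ with rows $r\ge 4$.} Passing from a column $j$ to the column $j+4$ leaves the top half of $H$ unchanged and flips the sign of the bottom half. This separates pairs whose numbers of second-half indices have different parity.
\item \emph{Restrict to the top half.} The top half reduces to $F_4$-products $\omega^{r(i+j)}$, so it forces $i_1+j_1\equiv i_2+j_2 \pmod 4$ on the reduced indices.
\item \emph{Use the bottom half.} There the factors $d_{i}d_{j}$ from $D$ must agree, which yields monomial equations such as $a^2=b$ or $ab=c$.
\end{itemize}
In each case one shows that the equations are not identically satisfied. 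Consequently the set of bad parameters is a finite union of proper subvarieties of the torus, and a generic choice of $(a,b,c)$ works. For instance one can take $a,b,c$ as $e^{\mathrm{i}\theta}$ with multiplicatively independent, non-root-of-unity values.

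\textbf{Main obstacle.} The main obstacle is this case check: making sure no pair is forced to coincide \emph{identically} in the chosen family. A structural collision, such as one coming from $F_4$ symmetry that the phases $D$ do not break, would kill the family. My first move would be to compute the $36$ vectors symbolically and reduce the $630$ comparisons to the parity/sum classification above, confirming that each surviving collision imposes a genuine relation on $a,b,c$. Alternatively, one can fix explicit values such as $a=e^{\mathrm{i}}$, $b=e^{\sqrt2\,\mathrm{i}}$, $c=e^{\sqrt3\,\mathrm{i}}$ and verify the $630$ inequalities (\ref{eq:3}) directly, possibly by computer. By \cref{lemma:2.1} this gives an $MCSQLS(8)$.
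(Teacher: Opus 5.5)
\textbf{There is a genuine gap: your family fails for every choice of $(a,b,c)$, so the genericity argument cannot work.} In $H=\begin{pmatrix} A & A\\ B & -B\end{pmatrix}$, columns $j$ and $j+4$ agree on the top half and differ only by sign on the bottom half. Hence $h_{rj}^2=h_{r,j+4}^2$ for every $r$, so $v_{jj}=v_{j+4,j+4}$ and $L(j,j)=L(j+4,j+4)$.

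The simplest instance is column $4=(1,1,1,1,-1,-1,-1,-1)^T$, which is real, so $v_{44}=v_{00}=\mathbf{1}$. This is exactly the defect you correctly flagged for real Hadamard matrices.

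More generally, write $j=4s+t$. Then $v_{j_1j_2}$ has top half $(\omega^{x(t_1+t_2)})_x$ and bottom half $(-1)^{s_1+s_2}d_{t_1}d_{t_2}(\omega^{x(t_1+t_2)})_x$. So it depends only on the multiset $\{t_1,t_2\}$ and the parity of $s_1+s_2$. For example, $v_{05}=v_{14}$ and $v_{t_1t_2}=v_{4+t_1,4+t_2}$. The $36$ vectors therefore fall into at most $2\cdot 10=20$ classes, so $C(L)\le 20$ for all parameters.

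Your outline misses this in two places:
\begin{itemize}
\item Your first bullet only separates pairs of \emph{different} parity.
\item Your third bullet assumes the bottom half yields a monomial relation. For these colliding pairs the factor $d_{t_1}d_{t_2}$ is literally the same on both sides, so no equation arises.
\end{itemize}
The bad set is the whole torus, not a proper subvariety.

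\textbf{Your fallback fails for the same reason.} The five-parameter affine family $F_8^{(5)}$ through $F_8$ is the block family $\begin{pmatrix} A & DB\\ A & -DB\end{pmatrix}$ written in Fourier order. Its phase matrix is $4$-periodic in the column index. So throughout the family $h_{r,j+4}=(-1)^rh_{rj}$, column $4$ stays the real vector $((-1)^r)_r$, and again $v_{jj}=v_{j+4,j+4}$.

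\textbf{What is missing.} You need an order-$8$ Hadamard matrix with no two columns related by a sign pattern (up to phase). More precisely, the products $h_{ri}h_{rj}$ must separate all $36$ unordered column pairs. Block-doubling constructions and $F_8$-affine families cannot provide this.

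\textbf{What the paper does.} It exhibits such a matrix explicitly. Its dephased entries are $\pm a^{e}$ with $e\in\{-1,0,1,2\}$ and $a=(1-2\sqrt2\,\mathrm{i})/3$. This $a$ is a root of the non-monic polynomial $3x^2-2x+3$, so it is not an algebraic integer and hence not a root of unity. Equal products therefore force equal exponent sums. The paper then checks that the $36$ exponent-sum vectors over rows $1,2,3,5$ are pairwise distinct, which gives condition (\ref{eq:3}) via \cref{lemma:2.1}. Your proof needs a concrete matrix of this kind, together with an actual verification, in place of the proposed family.
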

\begin{proof}
Let $a=\frac{1-\rm{i}2\sqrt{2}}{3}$, and \[
H=\begin{pmatrix}
1&1&1&1&1&1&1&1\\
1&-a^2&-1&a^2&a&-a&-a&a\\
1&-1&-1&-a&\frac{1}{a}&1&-\frac{1}{a}&a\\
1&a^2&-a&a&-1&a&-a&-a^2\\
1&a&\frac{1}{a}&-1&1&-1&\frac{1}{a}&-a\\
1&-a&1&a&-1&\frac{1}{a}&-\frac{1}{a}&-1\\
1&-a&-\frac{1}{a}&-a&-\frac{1}{a}&-\frac{1}{a}&1&-a\\
1&a&a&-a^2&-a&-1&-a&a^2
\end{pmatrix}.
\]

By \cref{lemma:2.1} it suffices to show \cref{eq:3} for all $0\le i_1\le j_1\le 7$, $0\le i_2\le j_2\le 7$. Because $3a^2-2a+3=0$, the minimal polynomial of $a$ over $\mathbb Q$ has degree $2$. Since it is not monic, $a$ is not an algebraic
integer. While every root of unity is an algebraic integer, so $a$ is not a root of
unity. And if there exists integer $k_1,k_2$ such that $a^{k_1}=\pm a^{k_2}$, then $k_1=k_2$. $h_{ij}$ has the form $\pm1 a^{e_{ij}}$, $e_{ij}\in\{-1,0,1,2\}$, $i,j\in [8]$. Thus $h_{ri_1}h_{rj_1}\neq h_{ri_2}h_{rj_2}$ forces $e_{ri_1}+e_{rj_1}\neq e_{ri_2}+e_{rj_2}$. Choose $r=1,2,3,5$ and we show for all $0\le i_1\le j_1\le 7$, $0\le i_2\le j_2\le 7$, $(e_{1i_1}+e_{1j_1},e_{2i_1}+e_{2j_1},e_{3i_1}+e_{3j_1},e_{5i_1}+e_{5j_1})^T\neq (e_{1i_2}+e_{1j_2},e_{2i_2}+e_{2j_2},e_{3i_2}+e_{3j_2},e_{5i_2}+e_{5j_2})^T$. Denote $E_{ij}=(e_{1i}+e_{1j},e_{2i}+e_{2j},e_{3i}+e_{3j},e_{5i}+e_{5j})$. Each $E_{ij}$ are listed in Table 1.

\begin{table}[h]
\centering
\caption{ The 36 distinct exponent pair sums $E_{ij}$ for coordinates $\{1,2,3,5\}$}.
\label{tab:sums}
\renewcommand{\arraystretch}{1.15}
\begin{tabular}{llll}
\toprule
$E_{00}=(0,0,0,0)$&$E_{07}=(1,1,2,0)$&$E_{01}=(2,0,2,1)$& $E_{34}=(3,0,1,1)$   \\
$E_{02}=(0,0,1,0)$&$E_{27}=(1,1,3,0)$ &$E_{67}=(2,0,3,-1)$ &$E_{36}=(3,0,2,0)$\\
$E_{22}=(0,0,2,0)$&$E_{44}=(2,-2,0,0)$ &$E_{12}=(2,0,3,1)$  &$E_{15}=(3,0,3,0)$\\
$E_{04}=(1,-1,0,0)$  & $E_{46}=(2,-2,1,-1)$  &$E_{03}=(2,1,1,1)$  &$E_{35}=(3,1,2,0)$ \\
$E_{06}=(1,-1,1,-1)$ & $E_{66}=(2,-2,2,-2)$  &$E_{23}=(2,1,2,1)$  &$E_{17}=(3,1,4,1)$ \\
$E_{24}=(1,-1,1,0)$  &$E_{45}=(2,-1,1,-1)$   &$E_{57}=(2,1,3,-1)$  &$E_{37}=(3,2,3,1) $ \\
$E_{26}=(1,-1,2,-1)$ & $E_{56}=(2,-1,2,-2)$   &$E_{77}=(2,2,4,0)$  & $E_{11}=(4,0,4,2)$\\
$E_{05}=(1,0,1,-1)$  &$E_{55}=(2,0,2,-2)$      &$E_{14}=(3,-1,2,1)$   & $E_{13}=(4,1,3,2)$\\
$E_{25}=(1,0,2,-1)$  &$E_{47}=(2,0,2,0)$    &$E_{16}=(3,-1,3,0) $   & $E_{33}=(4,2,2,2)$\\
\bottomrule
\end{tabular}
\end{table}
It is easy to verify every $E_{ij}$ is different. The proof is complete.
\end{proof}


\begin{thm}\rm{(\cite{Bjorck})}\label{thm:2.1}
(i) Let $p\equiv 3\pmod{4}$ be a prime and consider  the decomposition of the all-one matrix $J=I+S+N$ where $S$ and $N$ are $(0,1)$ circulant matrices whose first row is the characteristic vector of the quadratic residues (QR) and nonresidues (QNR) modulo $p$, respectively. Then the matrix $H:=I+S+\alpha N$ is a complex Hadamard matrix where $I$ is the identity matrix and $\alpha=\frac{-(p-1)\pm \mathrm{i}2\sqrt{p}}{p+1}$;\\
(ii) Let $p\equiv 1\pmod{4}$ be a prime and $S$ and $N$ are $(0,1)$ circulant matrices whose first row is the characteristic vector of the quadratic residues and nonresidues modulo $p$, respectively. Then the matrix $H:=I+\alpha S+\overline{\alpha} N$ is a complex Hadamard matrix where $\alpha=\frac{-1\pm \sqrt{p}+\mathrm{i}\sqrt{p^2-3p\pm 2\sqrt{p}}}{p-1}$.
\end{thm}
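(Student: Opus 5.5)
This is Björck's theorem on circulant complex Hadamard matrices. The plan is to verify directly that $HH^\dagger=pI$ by computing inner products of rows, using the standard counting facts for quadratic residues. Write $H=I+aS+bN$, with $(a,b)=(1,\alpha)$ in case (i) and $(a,b)=(\alpha,\overline{\alpha})$ in case (ii).

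All of $I,S,N$ are circulant, so they commute. Also $S^T$ and $N^T$ are circulant with first rows given by the negatives of the residues and nonresidues, respectively. Hence:
\begin{itemize}
\item For $p\equiv 1\pmod 4$, $-1$ is a residue, so $S^T=S$ and $N^T=N$.
\item For $p\equiv 3\pmod 4$, $-1$ is a nonresidue, so $S^T=N$ and $N^T=S$.
\end{itemize}
Since every entry has modulus one ($|\alpha|=1$ must be checked from the formula), the diagonal of $HH^\dagger$ is automatically $p$. It therefore suffices to show that the off-diagonal entries vanish.

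\textbf{Case (i).} Expand
\[HH^\dagger=(I+S+\alpha N)(I+N+\overline{\alpha}S).\]
This requires the products $S^2, SN, N^2$, which are given by the cyclotomic numbers of order $2$. For $p\equiv 3 \pmod 4$ we have $S+N=J-I$ and $SN=NS=\frac{p+1}{4}(J-I)$. The matrix $S^2$ is $\frac{p+1}{4}N+\frac{p-3}{4}S$ off the diagonal, with diagonal $0$ (since $x^2+y^2$ type counts vanish when $-1$ is a nonresidue). Symmetrically, $N^2=\frac{p+1}{4}S+\frac{p-3}{4}N$.

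Substituting, the off-diagonal part of $HH^\dagger$ becomes $c_1 S+c_2 N$, with $c_1,c_2$ polynomials in $\alpha,\overline\alpha$. Using $\overline\alpha=1/\alpha$, the symmetry $S\leftrightarrow N$ under transpose, and the Hermitian property of $HH^\dagger$, it suffices to show one coefficient vanishes. That condition reduces to a quadratic $(p+1)\alpha^2+2(p-1)\alpha+(p+1)=0$ (up to normalization), whose roots are exactly $\alpha=\frac{-(p-1)\pm \mathrm{i}2\sqrt p}{p+1}$. These roots have modulus $1$ because the product of the roots is $1$ and the discriminant is negative.

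\textbf{Case (ii).} Here $S,N$ are symmetric, and
\[HH^\dagger=(I+\alpha S+\overline\alpha N)(I+\overline\alpha S+\alpha N).\]
For $p\equiv 1\pmod 4$ the cyclotomic numbers give:
\begin{itemize}
\item $S^2=\frac{p-1}{2}I+\frac{p-5}{4}S+\frac{p-1}{4}N$,
\item $N^2=\frac{p-1}{2}I+\frac{p-1}{4}S+\frac{p-5}{4}N$,
\item $SN=\frac{p-1}{4}(J-I)$.
\end{itemize}
Collecting the coefficient of $S$, the coefficient of $N$ being its complex conjugate, gives a single complex equation in $\alpha$ together with $|\alpha|=1$. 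Writing $\alpha=x+\mathrm{i}y$, the real part yields a linear condition that fixes $x=\frac{-1\pm\sqrt p}{p-1}$. Then $y^2=1-x^2=\frac{p^2-3p\pm2\sqrt p}{(p-1)^2}$, matching the stated formula. The remaining imaginary-part condition should then hold identically.

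\textbf{Main obstacle.} The main obstacle is bookkeeping: getting the cyclotomic numbers right (including the diagonal contributions of $S^2$ and $N^2$) and checking that both the real and imaginary parts of the coefficient vanish. I would first sanity-check the computation on $p=3$ and $p=5$ before trusting the general algebra.
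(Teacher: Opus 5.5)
The paper does not prove this statement; it is quoted from Bj\"orck. Your strategy is the standard one and works: expand $HH^\dagger$ in the commuting algebra spanned by $I,S,N$ using the order-two cyclotomic numbers. Your products $S^2,N^2$ in case (i), and $S^2,N^2,SN$ in case (ii), are correct.

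However, in case (i) the product $SN=NS=\frac{p+1}{4}(J-I)$ is false. When $p\equiv 3\pmod 4$ we have $N=S^{T}$, so $SN=SS^{T}$ has diagonal entries $|QR|=\frac{p-1}{2}$. Its off-diagonal entries are the cyclotomic numbers $(0,0)=(1,1)=\frac{p-3}{4}$. Hence $SN=\frac{p-1}{2}I+\frac{p-3}{4}(J-I)$. Two small checks: for $p=3$, $S=P$ and $N=P^2$ with $P$ the cyclic shift, so $SN=I$, not $J-I$; for $p=7$, the row sums of $SN$ are $3\cdot 3=9$, not $12$.

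This error is not cosmetic. With your value, the diagonal of $HH^\dagger$ comes out as $1$ rather than $p$, contradicting your own unimodularity remark. The coefficient of $S$ becomes $\frac{p+3}{2}+\frac{p+1}{4}(\alpha+\overline{\alpha})$, which vanishes only if $\mathrm{Re}\,\alpha=-\frac{p+3}{p+1}<-1$, impossible for $|\alpha|=1$. So the quadratic you announce does not follow from the ingredients you list.

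With the correct $SN$, using $SN=NS$ and $\alpha\overline{\alpha}=1$, you get $HH^\dagger=I+(1+\overline{\alpha})S+(1+\alpha)N+2SN+\overline{\alpha}S^2+\alpha N^2$. The $I$-coefficient is $1+(p-1)=p$. The $S$- and $N$-coefficients are both equal to the real number $\frac{p-1}{2}+\frac{p+1}{4}(\alpha+\overline{\alpha})$. Multiplying by $4\alpha$ gives exactly $(p+1)\alpha^2+2(p-1)\alpha+(p+1)=0$, and the rest of your case (i) goes through.

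In case (ii) your description of the final step is also inaccurate, though harmlessly. The $S$- and $N$-coefficients are both equal to the real number $(\alpha+\overline{\alpha})+\frac{p-3}{2}+\frac{p-1}{4}(\alpha^2+\overline{\alpha}^2)$. With $x=\mathrm{Re}\,\alpha$ and $|\alpha|=1$, this equals $(p-1)x^2+2x-1$. That is a quadratic condition on $x$, not a linear one, and its roots are $x=\frac{-1\pm\sqrt p}{p-1}$. These roots satisfy $|x|\le 1$ for $p\ge 5$, which is what makes $y^2=\frac{p^2-3p\pm 2\sqrt p}{(p-1)^2}$ nonnegative. There is no separate imaginary-part condition to verify.
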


For a vector $v$ we write $v_k$ for its $(k+1)$-th coordinate. The following lemma is immediate from the definition of identical entries.
\begin{lemma}\label{lemma:d}
If two entries $\alpha,\beta\in \mathcal{H}_n$ have different numbers of distinct coordinate values or there exist $i\neq j\in [n]$ such that $\alpha_i=0$, $\beta_j=0$, then $\alpha\neq \beta$.
\end{lemma}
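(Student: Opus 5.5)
The statement is Lemma~\ref{lemma:d}. It follows directly from the definition of identical vectors, so the plan is simply to argue by contraposition. Suppose $\alpha=\beta$. Then there is $\theta\in[0,2\pi)$ with $\alpha=e^{\mathrm{i}\theta}\beta$, that is, $\alpha_k=e^{\mathrm{i}\theta}\beta_k$ for every $k\in[n]$. I will show that neither hypothesis of the lemma can then hold.

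\textbf{First hypothesis.} The map $z\mapsto e^{\mathrm{i}\theta}z$ is a bijection of $\mathbb{C}$. It therefore sends the set of coordinate values $\{\beta_k:k\in[n]\}$ bijectively onto $\{\alpha_k:k\in[n]\}$. Hence $\alpha$ and $\beta$ have the same number of distinct coordinate values. This contradicts the assumption that these numbers differ.

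\textbf{Second hypothesis.} Here the intended reading is that $\alpha$ and $\beta$ have their zero coordinates in different positions. That is, some $\alpha_i=0$ while $\beta_i\ne 0$, or some $\beta_j=0$ while $\alpha_j\neq0$. This is how the lemma will be used: to distinguish entries whose zero patterns differ, for instance in conference-matrix constructions where each column has a single zero placed on the diagonal. Since $e^{\mathrm{i}\theta}\neq 0$, we have $\alpha_k=0$ if and only if $\beta_k=0$, so the zero sets of $\alpha$ and $\beta$ coincide. Under the intended reading, this contradicts the hypothesis.

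\textbf{The only delicate point.} Read literally, the second condition (some $\alpha_i=0$ and some $\beta_j=0$ with $i\neq j$) would not suffice, since both vectors could also vanish at a common coordinate. So I would make the implicit assumption explicit in the write-up. In the applications each entry has exactly one zero coordinate, and then ``the zero of $\alpha$ is at $i$, the zero of $\beta$ is at $j\neq i$'' is exactly the statement that the zero sets differ. With that clarification the argument above completes the proof. There are no computational steps.
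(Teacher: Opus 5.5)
Your contrapositive argument is correct and is exactly the ``immediate from the definition'' reasoning the paper invokes without writing it out. You are also right that the second condition, read literally, is false (e.g.\ $\alpha=\beta=|2\rangle\in\mathcal{H}_3$ with $i=0$, $j=1$), so it must be read as ``the zero sets of $\alpha$ and $\beta$ differ.''

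One correction to your side remark: in Construction~\ref{con:2.3} the upper-triangular entries $L(i,j)$ with $i,j<n$ each have two zero coordinates, not one. These are coordinates $i$ and $j$, or $i$ and $n$ when $i=j$. So the application needs precisely your zero-set formulation, not a comparison of single zero positions.
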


\begin{thm}\label{thm:2.2}
There exists an $MCSQLS(p)$ for any prime $p>5$.
\end{thm}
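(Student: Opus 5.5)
We apply \cref{con:2.1} with $M=F=H$, where $H$ is the complex Hadamard matrix of \cref{thm:2.1}: $H=I+S+\alpha N$ if $p\equiv 3\pmod 4$, and $H=I+\alpha S+\overline{\alpha}N$ if $p\equiv 1\pmod 4$. Index rows and columns by $\mathbb{Z}_p$. The matrix is circulant, so $h_{rj}=\phi(j-r)$, where $\phi(0)=1$ and $\phi(x)$ equals $1$ or $\alpha$ (resp.\ $\alpha$ or $\overline\alpha$) according as $x$ is a QR or a QNR. The entry $L(i,j)$ is then, up to the factor $1/\sqrt p$, the vector whose $r$-th coordinate is $\phi(i-r)\phi(j-r)$. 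Since $H$ is not dephased, we do not use \cref{eq:3}. Instead we compare entries directly through their coordinate multisets, using the ratio criterion \cref{eq:2} and \cref{lemma:d} as needed.

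The first step is a number-theoretic fact: $\alpha$ is not a root of unity, so $\alpha^{k}=\alpha^{l}$ forces $k=l$. For $p\equiv 3\pmod 4$, $\alpha$ is a root of $(p+1)x^2+2(p-1)x+(p+1)$, which has non-integral coefficients after normalising. Hence, as in the proof of \cref{lemma:2.2}, $\alpha$ is not an algebraic integer. For $p\equiv1\pmod4$ a similar minimal-polynomial argument should work; we would check that $|\alpha|=1$ and that the minimal polynomial of $\alpha$ over $\mathbb Q$ is not monic with integer coefficients. With this fact, each coordinate $\phi(i-r)\phi(j-r)$ is recorded by an exponent $e_r(i,j)\in\{0,1,2\}$. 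In the case $p\equiv1\pmod 4$ we use exponents of $\alpha$ with $\overline\alpha=\alpha^{-1}$, giving $e_r\in\{-2,\dots,2\}$. Two entries $L(i_1,j_1)$ and $L(i_2,j_2)$ are identical iff the exponent vectors differ by a constant $c$, with $\alpha^c$ matched to the phase.

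Next we count occurrences. For $i=j$ the exponent vector takes the value $0$ at $r=i$ and is $2e$ elsewhere, where $e$ is the QR/QNR indicator. For $i\neq j$, the coordinates $r=i$ and $r=j$ contribute $\phi(j-i)$ and $\phi(i-j)$. For the others the pair (character of $i-r$, character of $j-r$) runs over the four classes QR-QR, QR-QNR, and so on. The number of $r$ in each class is given by the classical cyclotomic numbers of order $2$, roughly $(p-3)/4$ or $(p-1)/4$ each. A shift by $c$ cannot turn one such distribution into another unless $c=0$, because the extreme exponent values must match. This reduces the problem to showing that the exponent vectors themselves are distinct. A diagonal entry has one coordinate of exponent $0$ together with about $(p-1)/2$ coordinates of each of the exponents $0$ and $2$ and none equal to $1$, whereas off-diagonal entries have roughly $p/2$ coordinates equal to $1$. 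So diagonal and off-diagonal entries are distinguished by \cref{lemma:d} or by this multiset argument.

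The main step is then injectivity. For diagonal entries, $L(i,i)$ determines $i$ as the unique $r$ with $\phi(i-r)^2=1$ whose neighbourhood pattern differs; more simply, the positions of exponent $2$ form the translate $i-N$, which determines $i$ because $N\neq N+t$ for $t\ne0$. For off-diagonal entries we must recover the unordered pair $\{i,j\}$ from the vector $r\mapsto e(i-r)+e(j-r)$, where $e=\chi$-indicator. Writing $e=(1-\chi)/2$ on nonzero arguments, with $\chi$ the Legendre symbol, the vector is determined by $\chi(i-r)+\chi(j-r)$. Comparing $L(i_1,j_1)$ with $L(i_2,j_2)$, equality gives $\chi(i_1-r)+\chi(j_1-r)=\chi(i_2-r)+\chi(j_2-r)$ for all $r$ outside the four special points. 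Correlating with $\chi(i_1-r)$ and using $\sum_r\chi(a-r)\chi(b-r)=-1$ for $a\neq b$ yields $p-O(1)$ on one side and $O(1)$ on the other, unless $i_1\in\{i_2,j_2\}$. Hence $\{i_1,j_1\}=\{i_2,j_2\}$. This character-sum estimate is the crux. The obstacle we expect is handling the boundary coordinates ($r\in\{i_1,j_1,i_2,j_2\}$) and the constant shift carefully enough for small $p$ such as $p=7$. We would settle $p=7$ (and possibly $11,13$) by a direct exponent table, as in \cref{lemma:2.2}, and use the inequality $p-4>4+\ldots$ for larger $p$.
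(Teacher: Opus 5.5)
Your proposal is correct. It differs from the paper's proof at the two key steps: fixing the phase, and separating off-diagonal entries.

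\textbf{Fixing the phase.} The paper uses multiplicity counts coming from the cyclotomic numbers $a,b,c,d$: the value $1$ occurs a number of times that no other coordinate value matches, so any phase must be $1$. You instead show that $\alpha$ is not a root of unity and then match extreme exponents. Your lemma is the firmer foundation. For $p\equiv1\pmod4$ the paper only asserts $\alpha^4\neq1$, and it tacitly also needs $\alpha^3\neq1$, because at $k=i_1$ an exponent $\pm1$ is compared with one in $\{0,\pm2\}$.

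\textbf{Separating off-diagonal entries.} The paper first uses orthogonality and symmetry to reduce to four pairwise distinct indices. It then looks \emph{only} at the coordinates $k\in\{i_1,j_1,i_2,j_2\}$. For $p\equiv3\pmod4$ the four exponent equations sum to $0=4$, since $\chi_p(-t)=-\chi_p(t)$. For $p\equiv1\pmod4$ a parity check at $k=i_1$ suffices. This argument is exact and uniform in $p$.

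Your correlation argument discards precisely these boundary coordinates, which is what makes it lossy. Done carefully with four distinct indices, the bound is at least $p-7$ against at most $2$. So only $p=7$ would need a table, not $11$ or $13$.

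You can remove even that case by keeping the boundary terms:
\begin{itemize}
\item for $p\equiv1\pmod4$, the exponent of $L(i,j)_r$ is exactly $\chi_p(i-r)+\chi_p(j-r)$;
\item for $p\equiv3\pmod4$, it is $\frac12\bigl(2-\chi_p(i-r)-\chi_p(j-r)-\delta_{r,i}-\delta_{r,j}\bigr)$.
\end{itemize}
Multiply the exact equality of exponent vectors by $\chi_p(i_1-r)$ and sum over all $r\in\mathbb{Z}_p$.
\begin{itemize}
\item The left side is $p-2$ (resp.\ $p-2+\chi_p(i_1-j_1)$).
\item When $i_1\notin\{i_2,j_2\}$, the right side is $-2$ (resp.\ $-2+\chi_p(i_1-i_2)+\chi_p(i_1-j_2)\le0$).
\end{itemize}
This is impossible for $p\ge5$. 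Repeating the computation with $j_1$ gives $\{i_1,j_1\}=\{i_2,j_2\}$, with no orthogonality reduction and no small cases.

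The paper's local argument is shorter. Yours, in this sharpened form, is an equally uniform global proof.
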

\begin{proof}
Let $\chi_p(x)$ be the quadratic character on $\mathbb{Z}_p$, then
\[\chi_p(x)=\begin{cases}
1, &\text{$x\in QR(p)$},\\
0, &\text{$x=0$},\\
-1, &\text{$x\in QNR(p)$}.
\end{cases}
\]
{\bf Case 1.} For $p\equiv 3\pmod{4}$, by \cref{thm:2.1} (i) we construct a Hadamard matrix $H=(h_{ij})$, where \[h_{ij}=\begin{cases}
1, &\text{$j-i\notin QNR(p)$},\\
\alpha, &\text{$j-i\in QNR(p)$}.
\end{cases}\]
Apply \cref{con:2.1} with $M=F=H$ to obtain an $SQLS(n)$ $L$, where \[L(i,i)=\frac{1}{\sqrt{n}}\begin{pmatrix}
h_{0i}^2 \\
\vdots \\
h_{(p-1)i}^2
\end{pmatrix},~ ~ L(i,j)=\frac{1}{\sqrt{n}}\begin{pmatrix}
h_{0i}h_{0j}\\
\vdots\\
h_{(p-1)i}h_{(p-1)j}
\end{pmatrix},  i\neq j.\] Thus
\[
h_{ki}^2=\begin{cases}
1, &\text{$i-k\notin QNR(p)$}, \\
\alpha^2, &\text{$i-k\in QNR(p)$},
\end{cases}~ ~ h_{ki}h_{kj}=\begin{cases}
1, &\text{$i-k,j-k\notin QNR(p)$},\\
\alpha, &\text{exactly one of $i-k,j-k\in QNR(p)$},\\
\alpha^2, &\text{$i-k,j-k\in QNR(p)$}.
\end{cases}
,  i\neq j.
\]

Let $$a=\#\{k|i-k,j-k\in QR(p),i\not=j,k\in \mathbb{Z}_p\},$$ $$b=\#\{k|i-k,j-k\in QNR(p),i\not=j,k\in \mathbb{Z}_p\},$$ $$c=\#\{k|i-k\in QR(p),j-k\in QNR(p),i\not=j,k\in \mathbb{Z}_p\},$$ $$d=\#\{k|i-k\in QNR(p),j-k\in QR(p),i\not=j,k\in \mathbb{Z}_p\}.$$ Note that $0\notin QR(p)\cup QNR(p)$, the cases $k=i$ and $k=j$ are excluded. Thus \begin{equation}\label{eq:2.5}
a+b+c+d=p-2.
\end{equation}

By $\sum\limits_{k\in \mathbb{Z}_p}\chi_p(i-k)\chi_p(j-k)=-1$ and $\chi_p(0)=0$,
\begin{equation}\label{eq:2.6}
a+b-c-d=-1.
\end{equation}

Then because $\#\{k|i-k\in QR(p)\}=\frac{p-1}{2}$, and the case $k=j$ is excluded from $a+c$, hence if $i-j\in QR(p)$, $a+c=\frac{p-3}{2}$; if $i-j\in QNR(p)$, $a+c=\frac{p-1}{2}$. Consequently,
\begin{equation}\label{eq:2.7}
a+c=\frac{p-1}{2}-\frac{1+\chi_p(i-j)}{2}.
\end{equation}

Similarly, we have
\begin{equation}\label{eq:2.8}
b+c=\frac{p-3}{2}+\frac{1+\chi_p(j-i)}{2}.
\end{equation}

Solve \cref{eq:2.5,eq:2.6,eq:2.7,eq:2.8}, we obtain
\[
a=b=\frac{p-3}{4},
c=\frac{p-1-2\chi_p(i-j)}{4},
d=\frac{p-1+2\chi_p(i-j)}{4}.
\]

We first show that the diagonal entries are pairwise distinct. If $L(i,i)=L(j,j)$ for $i\neq j$, then because $a>0$, there exists $k'$ such that $i-k',j-k'\notin QNR(p)$, namely $L(i,i)_{k'}=L(j,j)_{k'}=1$. Therefore $L(i,i)_{k}=L(j,j)_{k}$ for all $k\in [p]$ where $L(i,i)_{k}$ denotes $(k+1)$th-coordinate of $L(i,i)$, i.e., $h_{ki}^2=h_{kj}^2$. $h_{ki}=h_{kj}$ holds for all $k\in [p]$, a contradiction with the definition of Hadamard matrices. Thus $L(i,i)$ and $L(j,j)$ are distinct.

Then we prove any entry in main diagonal is distinct from any entry in off-diagonal. Because $p\geq 7$, a diagonal entry has exactly two distinct coordinate values, $1$ and $\alpha^2$, while an off-diagonal entry has three, $1,\alpha$ and $\alpha^2$. By \cref{lemma:d}, the entries in main diagonal are different from the off-diagonal entries.

It remains to show that two off-diagonal entries are distinct. By the symmetry of $L$ and the orthogonality of rows and columns, it suffices to show $L(i_1,j_1)$ and $L(i_2,j_2)$ are distinct for pairwise different $i_1,i_2,j_1,j_2$. Because $p\equiv 3\pmod{4}$, $\chi_p(-1)=-1$, exactly one of $i-j$ and $j-i$ lies in $QNR(p)$, an off-diagonal entry contains $b=\frac{p-3}{4}$ ``$\alpha^2$'' s, $c+d+1=\frac{p+1}{2}$ ``$\alpha$'' s, $p-\frac{p+1}{2}-\frac{p-3}{4}=\frac{p+1}{4}=a+1$ ``1'' s . Because $1,\alpha$ and $\alpha^2$ are pairwise different, if $L(i_1,j_1)=L(i_2,j_2)$, every ``1'' in $L(i_1,j_1)$ is supposed to be one of $1,\alpha$ and $\alpha^2$, the number of ``1'' in $L(i_1,j_1)$ is not equal to the number of ``$\alpha$'' and ``$\alpha^2$''. Therefore $L(i_1,j_1)_{k}=L(i_2,j_2)_{k}$ for all $k\in [p]$ if $L(i_1,j_1)=L(i_2,j_2)$. Because \[L(i_1,j_1)_k=\begin{cases}
\alpha^{1-\frac{\chi_p(i_1-k)+\chi_p(j_1-k)}{2}}, &\text{$k\neq j_1,i_1$},\\
\alpha^{\frac{1-\chi_p(i_1-j_1)}{2}}, &\text{$k=j_1$}\\
\alpha^{\frac{1-\chi_p(j_1-i_1)}{2}}, &\text{$k=i_1$}.
\end{cases}\]

If $L(i_1,j_1)_k=L(i_2,j_2)_k$ for any $k\in\{i_1,j_1,i_2,j_2\}$, then
\begin{equation*}
\left\{
\begin{aligned}
\chi_p(i_2-j_1)+\chi_p(j_2-j_1)-\chi_p(i_1-j_1)=1, \\
\chi_p(i_2-i_1)+\chi_p(j_2-i_1)-\chi_p(j_1-i_1)=1, \\
\chi_p(i_1-j_2)+\chi_p(j_1-j_2)-\chi_p(i_2-j_2)=1, \\
\chi_p(i_1-i_2)+\chi_p(j_1-i_2)-\chi_p(j_2-i_2)=1. \\
\end{aligned}
\right.
\end{equation*}

Adding $4$ equations above, because when $p\equiv 3\pmod{4}$, $\chi_p(-t)=-\chi_p(t)$ for all $t\in \mathbb{Z}_p$, $0=4$ a contradiction. Therefore $L$ is an $MCSQLS(p)$.

{\bf Case 2.} For $p\equiv 1\pmod{4}$, by \cref{thm:2.1} (ii) the Hadamard matrix $H$ is
\[h_{ij}=\begin{cases}
1, & \text{$i=j$},\\
\alpha, &\text{$j-i\in QR(p)$},\\
\overline{\alpha}, &\text{$j-i\in QNR(p)$}.
\end{cases}
h_{ki}^2=\begin{cases}
1,& \text{$k=i$},\\
\alpha^2, &\text{$i-k\in QR(p)$},\\
\overline{\alpha^2},& \text{$i-k\in QNR(p)$}.
\end{cases}\]

Thus

\[h_{ki}h_{kj}=\begin{cases}
1,& \text{$(i-k)(j-k)\in QNR(p)$},\\
\alpha,& \text{$i-j,j-i\in QR(p)$},\\
\overline{\alpha}, &\text{$i-j,j-i\in QNR(p)$},\\
\alpha^2,& \text{$i-k,j-k\in QR(p)$},\\
\overline{\alpha^2},& \text{$i-k,j-k\in QNR(p)$}.
\end{cases},i\neq j.\]

Similar to  Case 1, we may obtain \[
a=\frac{p-3-2\chi_p(j-i)}{4},
b=\frac{p-3+2\chi_p(j-i)}{4},
c=d=\frac{p-1}{4}.
\]

First we show two entries in main diagonal are distinct. A diagonal entry contains the value one ``1'', $\frac{p-1}{2}$ ``$\alpha^2$'' s, $\frac{p-1}{2}$ ``$\overline{\alpha^2}$'' s. Because $\alpha^4\neq 1$, if $L(i,i)=L(j,j)$, $L(i,i)_k=L(j,j)_k$ for all $k\in [p]$, a contradiction with definition of Hadamard matrices.

Next, as in (i), a diagonal entry and an off-diagonal entry have different numbers of distinct coordinate values when $p>5$, and are distinct by \cref{lemma:d}.

Finally it suffices to show two entries in off-diagonal entries are distinct. An off-diagonal entry contains the $c+d=\frac{p-1}{2}$ ``1'' s, $a$ ``$\alpha^2$'' s, $b$ ``$\overline{\alpha^2}$'' s, two ``$\alpha$'' s or ``$\overline{\alpha}$'' s, according to $i-j$ whether in $QR(p)$. When $p>5$, the number of ``1'' is $\frac{p-1}{2}$, while the number of every other values at most $\frac{p-1}{4}$. Therefore when $p>5$, the number of ``1'' is different from the number of every other coordinates and $\alpha,\overline{\alpha},\alpha^2,\overline{\alpha^2}\neq 1$, thus if $L(i_1,j_1)=L(i_2,j_2)$ for pairwise different $i_1,j_1,i_2,j_2$, $L(i_1,j_1)_k=L(i_2,j_2)_k$, i.e., $\alpha^{\chi_p(i_1-k)+\chi_p(j_1-k)}=\alpha^{\chi_p(i_2-k)+\chi_p(j_2-k)}$. Thus $\chi_p(i_1-k)+\chi_p(j_1-k)=\chi_p(i_2-k)+\chi_p(j_2-k)$ for all $k\in[p]$. Let $k=i_1$, $\chi_p(j_1-i_1)=\chi_p(i_2-i_1)+\chi_p(j_2-i_1)$. Because $i_1,j_1,i_2,j_2$ are pairwise different, $\chi_p(j_1-i_1),\chi_p(i_2-i_1),\chi_p(j_2-i_1)\in \{\pm1\}$, $\pm1\pm1\in\{\pm2,0\}$ not $\pm1$, a contradiction.
\end{proof}

A complex {\it conference matrix} of order $n$, denoted by $C$, is an $n\times n$ matrix with zero diagonal and unimodular off-diagonal entries  satisfying $CC^\dagger=(n-1)E_n$. If the entries of $C$ in the first row and column are all $1$, then $C$ is called {\it normalized}. On recent research of complex conference matrices, readers can refer to \cite{C1,C4,C3}.

\begin{lemma}\rm{(\cite{C1,C2})}\label{lemma:2.4}
There exists a symmetric complex conference matrix of order $n$ where $n$ is 
(i) $p^k+1$ where $p$ is a prime and $p^k\geq 4$;
(ii) $2t-1\equiv 1 \pmod 4$ is an odd prime power for $t\geq 3$.
\end{lemma}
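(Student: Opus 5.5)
This lemma is quoted from \cite{C1,C2}, and the paper itself does not prove it. The plan below is how I would attempt an independent proof; it settles the case $q\equiv 1\pmod 4$ by the classical Paley construction and leaves the other cases as an explicit, unverified design problem. I would reduce everything to a ``core'' matrix indexed by a finite field $\mathbb F_q$ and then border it. If $W=(w_{xy})_{x,y\in\mathbb F_q}$ has zero diagonal and unimodular off-diagonal entries, the normalized matrix
\[
C=\begin{pmatrix}0&\mathbf 1^T\\ \mathbf 1& W\end{pmatrix}
\]
is a symmetric conference matrix of order $q+1$ exactly when three conditions hold. First, $W=W^T$. Second, every row sum of $W$ is $0$, which gives orthogonality against the first row. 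Third, $WW^\dagger=qE_q-J$.

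I would take $W$ circulant-like, $w_{xy}=\varphi(x-y)$ with $\varphi(0)=0$. The three conditions then become:
\begin{itemize}
\item $\varphi(-z)=\varphi(z)$;
\item $\sum_{z\ne0}\varphi(z)=0$;
\item $\sum_{z}\varphi(z)\overline{\varphi(z+d)}=-1$ for every $d\neq0$.
\end{itemize}

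\textbf{Case (i), $q=p^k\equiv 1\pmod 4$.} The quadratic character $\chi$ works. It is even because $\chi(-1)=1$, it sums to $0$, and the Jacobi-sum identity $\sum_z\chi(z)\chi(z+d)=-1$ (the same identity used in \cref{thm:2.2}) gives the third condition. This is the real symmetric Paley conference matrix, which also covers case (ii), since there the relevant odd prime power is $\equiv 1\pmod 4$ and the order is that prime power plus one.

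\textbf{Case (i), $q\equiv 3\pmod 4$ or $q=2^k$.} Here the real Paley matrix is antisymmetric, so a genuinely complex choice is needed. My plan is to drop the circulant form. I would set $w_{xy}=\psi(x,y)$ with $\psi$ built from an additive character $e(\cdot)$ of $\mathbb F_q$ and a multiplicative character $\lambda$, for instance
\[
w_{xy}=\lambda(x-y)^{\epsilon}\,e(xy)\quad (x\ne y),
\]
choosing $\epsilon$ and the twist so that $\lambda(-1)$ and the sign ambiguity cancel under $x\leftrightarrow y$. The row conditions would then reduce to evaluating Gauss sums or Kloosterman-type sums of absolute value controlled by $\sqrt q$. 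The aim is to force these sums to equal exactly $-1$ and $0$, in the same spirit as the count of $a,b,c,d$ in \cref{thm:2.2}.

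For $p=2$, where there is no quadratic character, I would instead try $w_{xy}=e(\mathrm{Tr}(\beta x y))$-type entries, or a multiplicative character of order dividing $q-1$ applied to $x+y$, with the diagonal defined to be $0$ separately.

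The hard step is this non-Paley case: finding a symmetric unimodular kernel whose off-diagonal Gram sums are exactly $-1$. I do not expect a naive character choice to succeed. If the direct ansatz fails, my fallback is to start from the Hermitian matrix $\mathrm{i}Q$, where $Q$ is the antisymmetric Paley core, and look for diagonal unitaries $D_1,D_2$ and a permutation matrix $P$, for example $P:x\mapsto -x$, with $D_1\,\mathrm{i}Q\,P\,D_2$ symmetric with zero diagonal. The orthogonality relations are invariant under such monomial operations, so only symmetry and the zero diagonal would need checking.
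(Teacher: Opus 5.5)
Your bordering reduction and the Paley case $q=p^k\equiv 1\pmod 4$ are correct. The rest of (i), however, is not proved, and the fallback you describe cannot work. A monomial transform of an antisymmetric matrix $A$ that keeps a zero diagonal must use mutually inverse row and column permutations, so it equals $D_1(PAP^T)D_2$ with $PAP^T$ still antisymmetric. Symmetry would then force $r_k=-r_j$ for all $j\neq k$, where $r_j=(D_2)_{jj}/(D_1)_{jj}$, which is impossible once there are three indices. Your concrete choice $x\mapsto -x$ gives the symmetric kernel $\chi(x+y)$, but its zeros lie on $y=-x$, not on the diagonal.

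The missing idea is that you never need to leave the difference kernel $w_{xy}=\varphi(x-y)$. Take $\varphi=\lambda$, any nontrivial multiplicative character of $\mathbb F_q$ with $\lambda(-1)=1$ and $\lambda(0)=0$. Row sums vanish by orthogonality of characters. The substitution $u=z/(z+d)$ is a bijection of $\mathbb F_q\setminus\{0,-d\}$ onto $\mathbb F_q\setminus\{0,1\}$, so $\sum_z\lambda(z)\overline{\lambda(z+d)}=\sum_{u\neq 0,1}\lambda(u)=-1$. Such a $\lambda$ exists exactly when $q\ge 4$: for even $q$ every character is even, and for odd $q$ the even characters form a subgroup of order $(q-1)/2$. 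This matches the hypothesis $p^k\ge 4$; for $q=3$, normalizing shows no symmetric matrix of order $4$ exists. Your $p=2$ suggestion of a multiplicative character of $x+y$ is in fact this construction, since $x+y=x-y$ there, but you left it unverified.

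Second, you misread (ii). There the order is $n=2t-1=q$ itself, an odd number. This is how Lemma~\ref{lemma:2.5} uses it: Construction~\ref{con:2.3} raises the order by one to reach $2t$. Under your reading, (ii) would merely be a subcase of (i). A bordered Paley matrix therefore has the wrong order. Moreover, no real conference matrix of odd order $n\ge 3$ exists, because two rows overlap in an odd number $n-2$ of $\pm1$ products. So a genuinely complex construction is needed. One that works, for $q\equiv 1\pmod 4$ and $Q$ the symmetric Paley core, is
\[
C=\frac{1}{\sqrt{q-1}}\bigl(J-I+\mathrm{i}\sqrt{q-2}\,Q\bigr).
\]
Its off-diagonal entries $(1\pm\mathrm{i}\sqrt{q-2})/\sqrt{q-1}$ are unimodular, and $C$ is symmetric with zero diagonal. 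Since $JQ=QJ=0$, the matrices $J-I$ and $Q$ commute, so the cross terms cancel. Then $Q^2=qI-J$ gives $CC^\dagger=\frac{1}{q-1}\bigl((q-2)J+I+(q-2)(qI-J)\bigr)=(q-1)I$.
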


\begin{con}\label{con:2.3}
If there exists a symmetric complex conference matrix of order $n$, then there exists an $MCSQLS(n+1)$.
\end{con}
\begin{proof}
Let $C=(c_{ij})$ be the symmetric complex conference matrix of order $n$.
Let $L(i,j)=\frac{1}{\sqrt{n-1}}\begin{pmatrix}
c_{0i}c_{0j}\\
c_{1i}c_{1j}\\
\vdots \\
c_{(n-1)i}c_{(n-1)j}\\
c_{ij}
\end{pmatrix}$ for all $i,j\in [n]$, $L(i,n)=|i\rangle\in\mathcal{H}_{n+1}$ for $i\in [n+1]$ and  $L(n,j)=|j\rangle\in\mathcal{H}_{n+1}$ for $j\in [n]$.

Because $C$ is symmetric and $c_{ki}c_{kj}=c_{kj}c_{ki}$, $k\in \{1,2,\dots,n\}$, $L$ is symmetric.

For fixed $i$ and $j\ne j'$, $i,j,j'\in [n]$, then $(L(i,j),L(i,j'))=\sum\limits_{k=0}^{n-1} |c_{ki}|^2 \overline{c_{kj}} c_{kj'}+\overline{c_{ij}}c_{ij'}=\sum\limits_{k=0}^{n-1}  \overline{c_{kj}} c_{kj'}=0$, and because $c_{ii}=0$, $L(i,j)$ is orthogonal to $|i\rangle$, thus every row of $L$ is orthogonal. By symmetry of $L$, $L$ is an $SQLS(n+1)$.

Now we prove $L$ has maximal cardinality. It's sufficient to prove that the zero components are mutually distinct for each entry in the upper triangular. It is clear that $L(i,j),i,j\le n$ are different from $|0\rangle,|1\rangle,\dots,|n-1\rangle$. For $L(i,i),i\le n$, the zero components are the $(i+1)$-th and the $(n+1)$-th coordinates. For $L(i,j),i\neq j$, the zero components are $(i+1)$-th and $(j+1)$-th coordinates. By \cref{lemma:d}, the entries in the upper triangular of $L$ are different.
\end{proof}

\begin{lemma}\label{lemma:2.5}
There exists an $MCSQLS(n)$ for $n=p^k+2$ where $p^k\geq 4$ is a prime power, and for $n=2t$ where $t\geq 3$ and $2t-1\equiv 1 \pmod 4$ is an odd prime power.
\end{lemma}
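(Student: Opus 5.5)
The lemma should follow by combining \cref{lemma:2.4} with \cref{con:2.3}.

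For the first family, take $n=p^k+2$ with $p^k\ge 4$ a prime power. By \cref{lemma:2.4}(i) there is a symmetric complex conference matrix of order $m=p^k+1$. Applying \cref{con:2.3} to it gives an $MCSQLS(m+1)=MCSQLS(p^k+2)$.

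For the second family, take $n=2t$ with $t\ge 3$ and $2t-1\equiv 1\pmod 4$ an odd prime power. By \cref{lemma:2.4}(ii) there is a symmetric complex conference matrix of order $2t-1$. Applying \cref{con:2.3} gives an $MCSQLS(2t)$.

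This part is pure bookkeeping, and the only thing to check is the index shift: \cref{con:2.3} raises the order by exactly one, while the orders in \cref{lemma:2.4} are $p^k+1$ and $2t-1$.

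Because the lemma relies entirely on \cref{con:2.3}, I would also re-check that construction's cardinality claim. Symmetry of $L$ and orthogonality of its rows are already handled there.

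The point needing care is the distinctness argument for maximal cardinality, which uses \cref{lemma:d}:
\begin{itemize}
\item an entry $L(i,j)$ with $i\ne j<n$ has zeros exactly in coordinates $i$ and $j$ (indexing coordinates from $0$), since $c_{ii}=c_{jj}=0$, the last coordinate $c_{ij}\ne 0$, and all other coordinates are products of unimodular numbers;
\item a diagonal entry $L(i,i)$ has zeros exactly in coordinates $i$ and $n$;
\item the border entries are the basis vectors $|i\rangle$, which have $n$ zeros and differ from each other.
\end{itemize}
Thus any two upper-triangular entries have different zero patterns, so they are distinct by \cref{lemma:d}, and the count is $\frac{(n+1)(n+2)}{2}$.

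I expect no real obstacle. The only potential subtlety is checking that the orders in \cref{lemma:2.4} match the hypotheses exactly, namely $p^k\ge 4$ and $t\ge 3$.
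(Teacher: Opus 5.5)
Your proposal is correct and matches the paper's proof, which is the single line ``apply \cref{lemma:2.4} and \cref{con:2.3}'', using conference matrices of orders $p^k+1$ and $2t-1$ and the fact that the construction raises the order by one. Your re-check of maximal cardinality via zero patterns ($\{i,j\}$ for off-diagonal entries, $\{i,n\}$ for diagonal ones, and $n$ zeros for the border basis vectors) is the same argument the paper gives inside \cref{con:2.3}, stated more precisely than the paper's wording of \cref{lemma:d}.
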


\begin{proof}
Apply \cref{lemma:2.4} and \cref{con:2.3} to get the conclusion.
\end{proof}

\section{MCISQLS from Hadamard matrices}

\begin{thm}\label{thm:H}\rm{(\cite{butson})}
For any odd prime $p$, let $q=\frac{p-1}{2}$, $w=e^{2\pi i/p}$, and $n$ be the smallest quadratic nonresidue modulo $p$. Then there exists a Hadamard matrix $H=(h_{ij})$ of order $2p$, where

\[h_{ij}=
\begin{cases}
w^{i(qi+j)}, &i,j\in[p],\\
w^{ni(qi+(j-p))}, &i\in[p],j\in[2p]\setminus[p],\\
w^{-q(j-n(i-p))^2},&i\in[2p]\setminus[p],j\in[p],\\
w^{-nq(j-i)^2},&i,j\in[2p]\setminus[p].
\end{cases}
\]

\end{thm}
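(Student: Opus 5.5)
The plan is to verify $HH^\dagger=2pE_{2p}$ directly. Every entry of $H$ is a $p$-th root of unity, so the diagonal entries of $HH^\dagger$ equal $2p$ automatically. It remains to show that any two distinct rows are orthogonal. Throughout I work with exponents modulo $p$, using $2q=p-1\equiv -1$, i.e.\ $q\equiv -\tfrac12 \pmod p$. I write a row index in $[2p]\setminus[p]$ as $p+s$ with $s\in[p]$, and a column index in the second block as $p+j'$ with $j'\in[p]$. There are three cases, according to which blocks the two rows lie in.

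\emph{Both rows in $[p]$, indices $i\neq i'$.} The first block contributes $w^{q(i^2-i'^2)}\sum_{j\in[p]}w^{(i-i')j}$. The second block contributes $w^{nq(i^2-i'^2)}\sum_{j'\in[p]}w^{n(i-i')j'}$. Since $i-i'\not\equiv 0$ and $n$ is invertible mod $p$, both character sums vanish.

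\emph{Both rows in $[2p]\setminus[p]$, indices $p+s$, $p+s'$ with $s\neq s'$.} The difference of exponents in the first block is $-q\big[(j-ns)^2-(j-ns')^2\big]$. This is linear in $j$ with coefficient $2qn(s-s')\equiv -n(s-s')\not\equiv 0$, so the sum over $j$ is $0$. The second block is identical with $n$ factored out: its exponent is $-nq\big[(j'-s)^2-(j'-s')^2\big]$, which is linear in $j'$ with nonzero coefficient, so that sum is also $0$.

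\emph{Mixed case: row $i\in[p]$ against row $p+s$.} This is where the real work lies. Each block now gives a genuine quadratic Gauss sum, and the two sums must cancel rather than vanish separately. Write $g(a)=\sum_{x\in\mathbb{Z}_p}w^{ax^2}$, and recall $g(a)=\chi_p(a)g(1)$ for $a\not\equiv 0$.
\begin{itemize}
\item In the first block the exponent is $qi^2+ij+q(j-ns)^2$. With $q\equiv-\tfrac12$, completing the square gives $-\tfrac12\big(j-(i+ns)\big)^2+ins$. So this block contributes $w^{ins}g(q)$.
\item In the second block the exponent is $n(qi^2+ij')+nq(j'-s)^2$. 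This equals $-\tfrac n2\big(j'-(i+s)\big)^2+nis$, so this block contributes $w^{nis}g(nq)$.
\end{itemize}
The key point is that the linear phases agree, since both equal $w^{nis}$. Because $n$ is a quadratic nonresidue, $g(nq)=\chi_p(n)\chi_p(q)g(1)=-g(q)$. Hence the total inner product is $w^{nis}\big(g(q)-g(q)\big)=0$.

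The care needed in this last case is to get the conjugations right, since the second row is conjugated and its exponents change sign. I expect this bookkeeping to be the main obstacle, and I would settle it by carrying out the completion of the square explicitly with $q\equiv-\tfrac12$. Once the phases are seen to match, the cancellation follows from $\chi_p(n)=-1$. With all three cases done, $HH^\dagger=2pE_{2p}$, as claimed.
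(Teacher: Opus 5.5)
Your proof is correct. I checked all three cases, including the conjugations in the mixed case. There, completing the square with $q\equiv -\tfrac12$ does give $w^{nis}g(q)$ from the first block and $w^{nis}g(nq)$ from the second, and the two cancel because $\chi_p(n)=-1$.

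The paper gives no proof of this statement. It imports the matrix from Butson's construction by citation. Your argument is therefore a different, self-contained route. It costs some Gauss-sum bookkeeping, but it removes the dependence on the reference. It also shows that only the fact that $n$ is \emph{some} quadratic nonresidue is used; minimality of $n$ plays no role.

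Your cancellation step is the same device the paper uses later, in the idempotency part of the proof of Theorem~\ref{thm:2p}, where $G(n(n-1))=\chi_p(n)G(n-1)=-G(n-1)$.

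If you prefer not to quote $g(a)=\chi_p(a)g(1)$, you can get $g(an)=-g(a)$ for $a\not\equiv 0$ directly. As $x$ runs over $\mathbb{Z}_p$, the values $x^2$ and $nx^2$ together hit $0$ twice and every nonzero residue exactly twice. Hence $g(a)+g(an)=2\sum_{y\in\mathbb{Z}_p}w^{ay}=0$.
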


\begin{thm}\label{thm:2p}
There exists an  $MCISQLS(2p)$ for any odd prime $p$.
\end{thm}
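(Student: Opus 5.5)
The plan is to apply \cref{con:2.1} with $M=F=H$, where $H$ is the Butson-type Hadamard matrix of order $2p$ from \cref{thm:H}. This gives a symmetric $QLS(2p)$ $L$ with $L(i,j)=\frac{1}{\sqrt{2p}}(h_{ri}h_{rj})_{r\in[2p]}$. Two things then need checking: that the main diagonal is a transversal, and that $L$ has maximal cardinality. Since $H$ is not dephased (column $0$ is not all ones), I would work with the general form of the condition in \cref{lemma:2.1} rather than the simplified one.

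\textbf{Idempotency.} Every entry of $H$ is a power of $w=e^{2\pi \mathrm{i}/p}$. Let $\sigma$ be the Galois automorphism of $\mathbb{Q}(w)$ with $\sigma(w)=w^2$; it is well defined because $p$ is odd. The diagonal entries are $L(i,i)=\frac{1}{\sqrt{2p}}(h_{ri}^2)_r=\frac{1}{\sqrt{2p}}(\sigma(h_{ri}))_r$. Because $\sigma$ commutes with complex conjugation on cyclotomic fields, applying $\sigma$ entrywise to $HH^\dagger=2pE_{2p}$ shows that $\sigma(H)$ is again a Hadamard matrix. Hence its columns, which are the diagonal entries up to normalization, form an orthonormal basis.

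\textbf{Maximal cardinality.} Write $h_{ri}=w^{e_{ri}}$. Two entries satisfy $L(i_1,j_1)=L(i_2,j_2)$ exactly when
\[
D(r):=e_{ri_1}+e_{rj_1}-e_{ri_2}-e_{rj_2}
\]
is constant modulo $p$ over all $r\in[2p]$. I would first use the rows $r\in[p]$. There $D(r)$ is a polynomial in $r$ over $\mathbb{F}_p$ of degree at most $2$, and since it is evaluated at all $p$ points, it is constant iff its $r^2$ and $r$ coefficients vanish. A column in the first block contributes $qr^2+jr$ to $D$, and a column $p+j'$ in the second block contributes $nqr^2+nj'r$. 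I would then split into cases by how many of the two columns of each pair lie in each block.
\begin{itemize}
\item[(a)] If the block counts of the two pairs differ, the $r^2$ coefficients are, for example, $2q$ versus $q(1+n)$ or versus $2nq$. These differ because $q$ and $2$ are invertible and $n\not\equiv 1$, so $D$ is not constant.
\item[(b)] If both pairs lie in the first block, the first rows force $i_1+j_1=i_2+j_2$, and then $D(r)\equiv 0$ on $[p]$. On the rows $r=p+s$ we have $D=-q\bigl[(i_1-ns)^2+(j_1-ns)^2-(i_2-ns)^2-(j_2-ns)^2\bigr]$. The linear terms cancel, so this equals $-q(i_1^2+j_1^2-i_2^2-j_2^2)$, and it must be $0$. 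The sum and the sum of squares together determine the unordered pair $\{i,j\}$ over $\mathbb{F}_p$ for odd $p$, so the two pairs coincide.
\item[(c)] If both pairs lie in the second block, the same argument applies using $-nq(j'-s)^2$.
\item[(d)] In the mixed case, the first rows give $i_1+nj_1'=i_2+nj_2'$. The $s$-linear coefficient on the second rows gives $i_1+j_1'=i_2+j_2'$. Since $n\neq1$, these together force $i_1=i_2$ and $j_1'=j_2'$.
\end{itemize}
In every case the two entries coincide only when the index pairs are equal, so the upper triangle has $\frac{2p(2p+1)}{2}$ distinct entries.

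The main obstacle is the bookkeeping in the case analysis, and in particular handling the constant terms correctly. A constant $D$ on the first $p$ rows must also equal the same constant on the last $p$ rows, and this cross-block comparison is exactly what rules out the "same sum, different pair" coincidences in case (b). I would carry out the mixed case and the second-block case carefully, writing every exponent explicitly from \cref{thm:H} and reducing modulo $p$.
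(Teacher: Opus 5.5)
Your proposal is correct. For maximal cardinality you follow essentially the paper's route. Both arguments compare the exponents of $h_{ri_1}h_{rj_1}$ and $h_{ri_2}h_{rj_2}$ as polynomials over $\mathbb{F}_p$ on the two row blocks of $H$. The ingredients are the same: the sum of the indices from the first block, the sum of squares (or the $s$-linear coefficient) from the second block, and the $r^2$ coefficient $\pm q(n-1)$ or $\pm 2q(n-1)$ when the block counts differ.

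Your case split by block counts is more complete than the paper's. The paper restricts to pairwise different indices and leaves the second-block and mixed cases as ``similar''. Allowing a general phase constant is harmless but not needed. Although $H$ is not dephased, its row $0$ is all ones, so $D(0)=0$; this is how the paper fixes the phase.

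You genuinely diverge on idempotency. The paper computes $(L(i,i),L(j,j))$ directly. For $i,j$ in the same block this reduces to geometric sums. For $i\in[p]$ and $j\in[2p]\setminus[p]$ it completes squares and uses $G(ab)=\chi_p(a)G(b)$ with $\chi_p(n)=-1$, so the two Gauss sums cancel.

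You instead observe that the diagonal entries are the normalized columns of the entrywise square of $H$. That matrix is $\sigma(H)$ for the Galois automorphism $w\mapsto w^2$. Since $\sigma$ commutes with complex conjugation, $\sigma(H)$ is again Hadamard.

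Your route is shorter and avoids Gauss sums entirely. It also proves more: Construction~\ref{con:2.1} with $M=F=H$ gives an idempotent $SQLS$ for every Butson Hadamard matrix whose entries are roots of unity of odd order. The paper's computation, by contrast, is tied to this particular $H$.
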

\begin{proof}
Apply \cref{con:2.1} with $M=F=H$, where $H$ is the matrix in \cref{thm:H}. The entries of the resulting $SQLS$ $L$ fall into three cases. If $i,j\in[p]$, \[
L(i,j)_k=\begin{cases}
w^{k(i+j)+k^2(p-1)}, &k\in [p],\\
w^{-q\left[(i-n(k-p))^2+(j-n(k-p))^2\right]}, &k\in [2p]\setminus [p] .
\end{cases}
\]

If $i\in[2p]\setminus [p]$, $j\in [p]$,\[
L(i,j)_k=\begin{cases}
w^{k(qk+i-p)+nk(qk+j)}, &k\in [p],\\
w^{-q((i-p)-n(k-p))^2-nq(j-(k-p))^2}, &k\in [2p]\setminus [p] .
\end{cases}
\]

If $i,j\in[2p]\setminus [p]$,\[
L(i,j)_k=\begin{cases}
w^{nk(2qk+i+j-2p)}, &k\in [p],\\
w^{-nq\left[(j-k-p)^2+(i-k-p)^2 \right]}, &k\in [2p]\setminus [p] .
\end{cases}
\]

Note that $L(i,j)_0=1$ for all $i,j\in[2p]$. To prove $L$ is maximal cardinality, it suffices to show it is impossible for all $k\geq 1$ such that $L(i_1,j_1)_k= L(i_2,j_2)_k$ for pairwise different $i_1\le j_1,i_2\le j_2$.

\begin{enumerate}

\item[Case 1:] $i_1,i_2,j_1,j_2\in [p]$.

If $k\in[p]$, $k(i_1+j_1)+k^2\equiv k(i_2+j_2)+k^2\pmod{p}$. If $k\in [2p]\setminus[p]$, $-q(i_1^2+j_1^2)-2qn(k-p)(i_1+j_1)\equiv -q(i_2^2+j_2^2)-2qn(k-p)(i_2+j_2)$. Because $p$ is an odd prime, if $L(i_1,j_1)_k=L(i_2,j_2)_k$ for all $k\in [2p]$,\[
i_1+j_1\equiv i_2+j_2\pmod{p},\\
i_1^2+j_1^2\equiv i_2^2+j_2^2\pmod{p}.
\]
should be satisfied, a contradiction. The two cases $i_1,j_1,i_2,j_2\in [2p]\setminus[p]$ and $i_1,i_2\in[p]$, $j_1,j_2\in [2p]\setminus[p]$ are similar.

\item[Case 2:] $i_1,j_1\in [p], i_2,j_2\in[2p]\setminus [p]$.

If $L(i_1,j_1)_k=L(i_2,j_2)_k$ for $k\in [p]$, then $(-n+1)k+n(i_2+j_2-2p)-(i_1+j_1)\equiv 0\pmod{p}$. Because $-n+1\equiv 0\pmod{p}$, $k=\frac{n(i_2+j_2-2p)-(i_1+j_1)}{-n+1}$, at most one $k$ such that $L(i_1,j_1)_k=L(i_2,j_2)_k$. The remaining two cases $i_2\in [p]$, $i_1,j_1,j_2\in[2p]\setminus[p]$ and $i_1,i_2,j_1\in[p]$, $j_2\in [2p]\setminus [p]$ are similar.
\end{enumerate}

Thus $L$ is an $MCSQLS(2p)$. Then we prove $L$ is idempotent, namely $L(i,i)$ is orthogonal to $L(j,j)$ for all $0\le i<j\le 2p-1$.

\begin{enumerate}
\item[Case 1:] $i<j\in [p]$.

Then $(L(i,i),L(j,j))=\sum\limits_{k\in \mathbb{Z}_p} w^{2(i-j)k}+\sum\limits_{t\in \mathbb{Z}_p} w^{(p-1)(j-i)(-2nt+j+i)}=0$ where $t=k-p$ because $p$ is an odd prime. The case $i<j\in [2p]\setminus [p]$ is similar.

\item[Case 2:] $i\in [p],j\in [2p]\setminus [p]$.

Let $G(a)=\sum\limits_{k\in \mathbb{Z}_p} w^{ak^2}$. It is known that $G(ab)=\chi_p(a)G(b)$ when $a\neq 0$. Then
\begin{align*}
(L(i,i),L(j,j))&=\sum\limits_{k\in \mathbb{Z}_p}w^{(n-1)k^2+2k(i-nj)}+w^{2q(nj^2-i^2)}\sum\limits_{t\in \mathbb{Z}_p} w^{-2q\left[(n-n^2)k^2-2nk(i-j)\right]}\\
&=w^{-(n-1)^{-1}(i-nj)^2}\sum\limits_{k\in \mathbb{Z}_p} w^{(n-1)(k+(n-1)^{-1}(i-nj))^2}\\
&+w^{2q(nj^2-i^2)+n(i-j)^2(1-n)^{-1}}\sum\limits_{t\in \mathbb{Z}_p} w^{-n(1-n)(t-(i-j)(1-n))^2}\\
&=w^{-(n-1)^{-1}(i-nj)^2} G(n-1)\\
 &+w^{(n-1)^{-1}\left[-(n-1)(nj^2-i^2)-n(i-j)^2\right]} G(n(n-1))\\
&=w^{-(n-1)^{-1}(i-nj)^2} G(n-1)-w^{(n-1)^{-1}\left[-i^2-n^2j^2+2nij \right]} G(n-1)\\
&=0
\end{align*}
\end{enumerate}
Therefore $L$ is an  $MCISQLS(2p)$.
\end{proof}

\section{Direct Product Construction}
In this section, we give a direct product construction. For brevity, we use $A\cap B$ to denote the set of identical entries both in $QLS$ $A$ and $QLS$ $B$. We first recall the following lemma on the existence of infinitely pairwise disjoint $QLS$s.

\begin{lemma}\rm{(\cite{Z1})}\label{lemma:3.1}
Suppose $n\geq 3$. Let $A$ and $B$ be two $QLS(n)$s with $C(A)=c_1$ and $C(B)=c_2$, respectively. Then for any positive integer $k$, there exist $QLS(n)$s $A_1,A_2,\dots,A_k$ of cardinality $c_1$ and $B_1,B_2,\dots,B_k$ of cardinality $c_2$, such that $S\cap T=\emptyset$ for any distinct $S,T\in\{A_1,\dots,A_k,B_1,\dots,B_k\}$.
\end{lemma}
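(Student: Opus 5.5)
We have to prove Lemma \ref{lemma:3.1}: from two $QLS(n)$s $A,B$ with $C(A)=c_1$, $C(B)=c_2$, produce $k$ copies of each, with the same cardinalities and pairwise sharing no identical entry. The plan is to act on $A$ and $B$ by global unitaries and pick the unitaries generically. If $U$ is an $n\times n$ unitary, then $UA$ (apply $U$ to every entry) is again a $QLS(n)$, since unitaries map orthonormal bases to orthonormal bases. Moreover $C(UA)=C(A)$, because $U|u\rangle=e^{\mathrm{i}\theta}U|v\rangle$ if and only if $|u\rangle=e^{\mathrm{i}\theta}|v\rangle$. So we set $A_s=U_sA$ and $B_s=V_sB$ for unitaries $U_1,\dots,U_k,V_1,\dots,V_k$ to be chosen, and the only thing to arrange is disjointness.

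Disjointness reduces to finitely many avoidance conditions. Let $\mathcal{A}$ and $\mathcal{B}$ be the finite sets of distinct entries (up to phase) of $A$ and $B$. We need two kinds of condition:
\begin{itemize}
\item $U_s|u\rangle\neq U_t|v\rangle$ for $s\neq t$ and $u,v\in\mathcal{A}$;
\item $U_s|u\rangle\neq V_t|w\rangle$ for all $s,t$, $u\in\mathcal{A}$, $w\in\mathcal{B}$, together with the analogous conditions among the $V$'s.
\end{itemize}
Each condition has the form $W|x\rangle\neq|y\rangle$ up to phase, where $W$ is a product such as $U_t^{-1}U_s$ and $|x\rangle,|y\rangle$ are fixed unit vectors.

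Next, show that each bad set is small. Since $n\geq 2$, the set of unitaries $W$ with $W|x\rangle\in\{e^{\mathrm{i}\theta}|y\rangle\}$ is a proper closed real-algebraic subset of $U(n)$. It is a coset of the stabilizer of the line through $|x\rangle$, which is isomorphic to $U(1)\times U(n-1)$ and has strictly smaller dimension than $U(n)$. Hence it has Haar measure zero and is nowhere dense.

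Then choose the unitaries inductively. Choose $U_1,\dots,U_k,V_1,\dots,V_k$ one at a time. When choosing a new unitary $X$, each condition involving $X$ and an earlier chosen unitary $Y$ forbids $X$ from a set of the form $Y\cdot(\text{measure-zero set})$. There are only finitely many such sets, so a valid $X$ exists; for instance, a Haar-random $X$ works almost surely. Alternatively, one can choose all $2k$ unitaries independently at random and take a finite union bound over measure-zero events.

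The main obstacle, though a mild one, is the measure-zero claim: we must check that for fixed $|x\rangle,|y\rangle$ the set $\{W\in U(n): W|x\rangle \parallel |y\rangle\}$ really has positive codimension, which amounts to the unit sphere modulo phase, $\mathbb{CP}^{n-1}$, being positive-dimensional when $n\geq2$. We also need to treat identity up to phase carefully. The hypothesis $n\geq 3$ is more than enough, and nothing beyond it is used.
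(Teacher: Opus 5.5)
Your proof is correct, and it is essentially the approach the paper relies on. The paper gives no proof of its own here (it cites \cite{Z1}), but its remark that the lemma extends to $SQLS$ via Lemma~\ref{lemma:U} points to the same mechanism of taking unitary images $U_sA$, $V_tB$, which preserve the Latin property and the cardinality; your dimension count ($\dim U(n)-\dim\bigl(U(1)\times U(n-1)\bigr)=2n-2>0$), showing each forbidden set is a Haar-null coset, makes the generic choice of unitaries rigorous and indeed needs only $n\ge 2$.
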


\begin{lemma}\label{lemma:U}
Let $L=(|l_{ij}\rangle)$ be an $SQLS(n)$, $U$ be a unitary matrix, then $UL=(U|l_{ij}\rangle)$ is also an $SQLS(n)$ and $C(L)=C(UL)$.
\end{lemma}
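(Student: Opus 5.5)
The proof will check the two defining properties of an $SQLS(n)$ for $UL$ directly, and then compare cardinalities using the fact that a unitary map is a bijection on unit vectors that respects the relation of being identical.

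\emph{$UL$ is an $SQLS(n)$.} Fix a row $i$. The vectors $|l_{i0}\rangle,\dots,|l_{i(n-1)}\rangle$ form an orthonormal basis of $\mathcal{H}_n$. Since $U^\dagger U=E_n$, we have
\[
\langle l_{ij}|U^\dagger U|l_{ij'}\rangle=\langle l_{ij}|l_{ij'}\rangle=\delta_{jj'},
\]
so $U|l_{i0}\rangle,\dots,U|l_{i(n-1)}\rangle$ are again orthonormal. There are $n$ of them, so they form an orthonormal basis. The same computation applies to every column. Symmetry carries over because $L(i,j)=L(j,i)$ gives $U|l_{ij}\rangle=U|l_{ji}\rangle$. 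Hence $UL$ is an $SQLS(n)$.

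\emph{$C(UL)=C(L)$.} It suffices to show that $|u\rangle=|v\rangle$ in the sense of the paper (equal up to a phase) if and only if $U|u\rangle=U|v\rangle$ in the same sense. If $|u\rangle=e^{\mathrm{i}\theta}|v\rangle$, then $U|u\rangle=e^{\mathrm{i}\theta}U|v\rangle$ by linearity. Conversely, if $U|u\rangle=e^{\mathrm{i}\theta}U|v\rangle$, apply $U^{-1}=U^\dagger$ to both sides to get $|u\rangle=e^{\mathrm{i}\theta}|v\rangle$.

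So the map $|l_{ij}\rangle\mapsto U|l_{ij}\rangle$ induces a bijection between the equivalence classes of entries of $L$ and those of $UL$. The number of distinct entries is therefore unchanged.

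No step here is a genuine obstacle. The only point needing care is that "distinct" is defined up to a global phase, and linearity and invertibility of $U$ handle this directly.
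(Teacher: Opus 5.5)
Your proof is correct and complete. A unitary maps orthonormal bases to orthonormal bases and preserves the symmetry condition, and by linearity together with invertibility of $U$ it both preserves and reflects being identical up to a global phase; this gives a bijection on classes of entries and hence $C(UL)=C(L)$. The paper states this lemma without proof, treating it as evident, and your argument is the routine verification it leaves implicit.
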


By \cref{lemma:U}, \cref{lemma:3.1} can be generalized to $SQLS$.
\begin{lemma}\label{lemma:3.2}
Suppose $n\geq 3$. Let $A$ and $B$ be two $SQLS(n)$s with $C(A)=c_1$ and $C(B)=c_2$, respectively. Then for any positive integer $k$, there exist $SQLS(n)$s $A_1,A_2,\dots,A_k$ of cardinality $c_1$ and $B_1,B_2,\dots,B_k$ of cardinality $c_2$, such that $S\cap T=\emptyset$ for any distinct $S,T\in\{A_1,\dots,A_k,B_1,\dots,B_k\}$.
\end{lemma}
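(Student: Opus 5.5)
The plan is to reduce \cref{lemma:3.2} to \cref{lemma:3.1} through a symmetrization trick, using \cref{lemma:U} to carry symmetry along. Apply \cref{lemma:3.1} to $A$ and $B$, regarded simply as $QLS(n)$s. This gives families $A_1',\dots,A_k'$ and $B_1',\dots,B_k'$ with the right cardinalities and pairwise disjoint entry sets. However, the squares $A_s'$ need not be symmetric.

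To fix this, I would first look at how \cref{lemma:3.1} is proved in \cite{Z1}. I expect its construction to be of the form $A_s'=U_sA$ and $B_s'=V_sB$, where the $U_s,V_s$ are unitaries chosen generically, for instance with entries or phases taken algebraically independent. The point of such a choice is that $U_s|a\rangle=e^{\mathrm{i}\theta}V_t|b\rangle$ is avoided for the finitely many entry pairs involved.

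Assuming this, the proof is as follows.
\begin{enumerate}
\item Fix the finite sets of distinct entries (up to phase) $\mathcal{A}$ of $A$ and $\mathcal{B}$ of $B$.
\item Choose unitaries $U_1,\dots,U_k,V_1,\dots,V_k$ so that for any two distinct labels $X\neq Y$ among them, and any $|x\rangle,|y\rangle\in\mathcal{A}\cup\mathcal{B}$, the vectors $X|x\rangle$ and $Y|y\rangle$ are not identical. The condition $X|x\rangle=e^{\mathrm{i}\theta}Y|y\rangle$ is equivalent to $|\langle x|X^\dagger Y|y\rangle|=1$. For fixed $x,y$ this is a proper real-algebraic (closed, measure-zero) subset of $U(n)\times U(n)$, since $X^\dagger Y$ ranges over all of $U(n)$ and for $n\ge2$ some unitary does not send $|y\rangle$ to a multiple of $|x\rangle$. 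A finite union of such sets is still nowhere dense, so generic choices work. Choosing the unitaries one at a time avoids the finitely many bad sets created by those already fixed.
\item Set $A_s=U_sA$ and $B_s=V_sB$. By \cref{lemma:U}, each $A_s$ is an $SQLS(n)$ with $C(A_s)=c_1$, and each $B_s$ is an $SQLS(n)$ with $C(B_s)=c_2$.
\item Step 2 gives $S\cap T=\emptyset$ for any distinct $S,T$ in the family.
\end{enumerate}

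The main obstacle is making the genericity in step 2 rigorous. This requires showing that each bad set is nowhere dense in the connected manifold $U(n)$. The cleanest route is to fix $X$ and view the condition as a real-analytic equation in $Y$ that is not identically satisfied. An analytic function on a connected manifold that is not identically zero has a nowhere dense zero set, so the good choices form an open dense set. Note that the hypothesis $n\ge3$ is only needed to invoke \cite{Z1} in its original form; the direct argument above works for any $n\ge2$.
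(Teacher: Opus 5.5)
Your proposal is correct, and its core idea matches the paper's. The paper's entire proof is the remark that, by \cref{lemma:U}, \cref{lemma:3.1} carries over to $SQLS$. This tacitly relies on the disjoint copies in \cite{Z1} being obtained as unitary images $U_sA$, $V_sB$, which \cref{lemma:U} then shows remain symmetric with unchanged cardinality.

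Where you differ is that you do not depend on how \cite{Z1} builds those copies; you prove directly that suitable unitaries exist. For fixed $X$ and fixed entries $|x\rangle,|y\rangle$, the bad set $\{Y\in U(n): |\langle x|X^\dagger Y|y\rangle|=1\}$ is the zero set of $1-|\langle x|X^\dagger Y|y\rangle|^2$. This is a real-analytic function on the connected group $U(n)$, and it is not identically zero when $n\ge 2$. Its zero set is therefore closed and nowhere dense, so the $2k$ unitaries can be chosen one at a time, each avoiding finitely many such sets.

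This buys a self-contained argument that does not need the internal structure of the construction in \cite{Z1}. It also shows the hypothesis $n\ge 3$ can be relaxed to $n\ge 2$. The paper's version is shorter but leans entirely on that unstated feature of the cited proof.

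Your phrase ``Assuming this'' is unnecessary. None of your steps 1--4 actually uses the form of the proof in \cite{Z1}.
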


\begin{lemma}\label{lemma:3.3}
If there exists an $MCSQLS(m)$, then for every positive integer $h$ there exist $h$ $MCQLS(m)$s and $2$ $MCSQLS(m)$s such that all entries appearing in these $h+2$ $QLS(m)$s are mutually distinct.
\end{lemma}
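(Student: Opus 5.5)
We need $h$ $MCQLS(m)$s and two $MCSQLS(m)$s whose entries are all mutually distinct, i.e. pairwise disjoint as sets of entries up to phase. The tool is \cref{lemma:3.1} and its symmetric version \cref{lemma:3.2}, applied with $k$ large. The case $m\le 3$ is vacuous, since an $MCSQLS(m)$ exists only for $m\ge 6$ (every $SQLS(m)$ with $m\le5$ has cardinality $m<\frac{m(m+1)}{2}$). So $m\ge 6$, and in particular $m\ge 4$, and \cref{thm:1.0} supplies an $MCQLS(m)$ $B$ with $C(B)=m^2$. Let $A$ be the given $MCSQLS(m)$ with $C(A)=\frac{m(m+1)}{2}$.

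The plan is to produce many disjoint copies of each type. First apply \cref{lemma:3.1} to the pair $(B,B)$ with $k=h$. This gives $2h$ pairwise disjoint $QLS(m)$s, each of cardinality $m^2$, hence each an $MCQLS(m)$. Next apply \cref{lemma:3.2} to $(A,A)$ with $k=1$. This gives two disjoint $SQLS(m)$s $A_1,A_2$, each of cardinality $\frac{m(m+1)}{2}$, hence each an $MCSQLS(m)$.

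The difficulty is making these two families disjoint from each other, since the two lemmas are applied separately. To handle this, I would open up the proof of \cref{lemma:3.1}: the disjoint copies are obtained as $U_tL$ for unitaries $U_t$, chosen generically so that no image of an entry of one square is identical to an image of an entry of another. That choice is made from a family of unitaries such as diagonal phase matrices composed with a fixed generic unitary. For each pair of entries $u,v$ and each pair $s\ne t$, the set of choices with $U_s u = e^{\mathrm{i}\theta}U_t v$ is a proper closed, measure-zero condition on the parameters. There are finitely many such conditions, so a generic choice avoids all of them simultaneously.

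Concretely, I would pick unitaries $U_1,\dots,U_{h+2}$ independently from Haar measure (or generically). I would then set $M_t=U_tB$ for $t\le h$, and $S_1=U_{h+1}A$, $S_2=U_{h+2}A$. By \cref{lemma:U}, cardinality and symmetry are preserved, and left multiplication by a unitary preserves the QLS property in general, so the $M_t$ are $MCQLS(m)$s and the $S_i$ are $MCSQLS(m)$s.

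Entries within a single square are already distinct, by maximal cardinality. It remains to check distinctness across squares, i.e. that for $s\ne t$ and unit vectors $u,v$, the event that $U_su$ and $U_tv$ are identical has probability zero. Equivalently, $|\langle U_su,U_tv\rangle|=1$ must fail almost surely. Since $U_s^{-1}U_t$ is Haar distributed, this is a codimension-positive condition for $m\ge2$, so it has probability zero. A finite union of null events is null, so a valid choice exists.

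This measure-zero step is the only real obstacle. If an algebraic rather than probabilistic argument is preferred, one could instead invoke \cref{lemma:3.2} with $A$ and $B$ as the two input squares, provided symmetry of $B$ is not needed there.
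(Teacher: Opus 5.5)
Your final argument is correct, but it resolves the difficulty you flag (making the symmetric and non-symmetric families disjoint from each other) differently from the paper. The paper treats \cref{lemma:3.1,lemma:3.2} as black boxes. It takes two disjoint $MCSQLS(m)$s $A_1,A_2$ from \cref{lemma:3.2}. It then asks \cref{lemma:3.1} for $m(m+1)+h$ pairwise disjoint $MCQLS(m)$s instead of $h$. Because these are pairwise disjoint, each of the $m(m+1)$ distinct entries of $A_1,A_2$ lies in at most one of them, so discarding those that meet $A_1$ or $A_2$ leaves $h$. That pigeonhole step would have completed your first plan as written.

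Your Haar argument bypasses both lemmas instead. $U_s^{\dagger}U_t v$ is uniform on the unit sphere of $\mathbb{C}^m$, so it hits the circle $\{e^{\mathrm{i}\theta}u\}$ with probability zero for $m\ge 2$, and a finite union bound finishes. This is self-contained, and it in fact gives a direct proof of \cref{lemma:3.2}, which the paper justifies only in one line via \cref{lemma:U}. Your speculation about how \cref{lemma:3.1} is proved in its source is therefore unnecessary. The paper's route is shorter given the cited results; yours needs only \cref{thm:1.0} and (the QLS version of) \cref{lemma:U}.

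Two small corrections.

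\begin{enumerate}
\item Your closing fallback fails. \cref{lemma:3.2} needs both inputs to be symmetric, and $B$ cannot be, since $C(B)=m^2>\frac{m(m+1)}{2}$. Feeding $(A,B)$ into \cref{lemma:3.1} instead gives disjointness but no guarantee that the copies of $A$ stay symmetric. That is exactly why the paper uses the pigeonhole step.
\item Your claim that an $MCSQLS(m)$ exists only for $m\ge 6$ fails at $m=1$. Every $SQLS(1)$ has $C(L)=1=\frac{1\cdot 2}{2}$, and there the lemma is actually false, because all unit vectors of $\mathcal{H}_1$ are identical. So the statement tacitly excludes $m=1$, as does the paper's proof, which needs $m\ge 3$ for \cref{lemma:3.2}. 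This is harmless, since the lemma is only used with $m\ge 6$.
\end{enumerate}
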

\begin{proof}
By \cref{lemma:3.2}, there exist $2$ disjoint $MCSQLS(m)$s. By \cref{lemma:3.1}, there exist $m(m+1)+h$ pairwise disjoint $MCQLS(m)$s. Among the two $MCSQLS(m)$s, there are exactly $m(m+1)$ different entries. By the pigeonhole principle, there exist $h$ pairwise disjoint $MCQLS(m)$s such that they are disjoint from two $MCSQLS(m)$s.
\end{proof}

\begin{lemma}\label{lemma:3.4}
For any $n\geq 2$, if there exists an $MCSQLS(m)$, then there exists an $MCSQLS\allowbreak(nm)$. Furthermore for every odd $n\ge 3$, if there exists an  $MCISQLS(m)$, then there exists an  $MCISQLS(nm)$.
\end{lemma}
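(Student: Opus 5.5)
We plan a direct product (block) construction built on a classical symmetric Latin square of order $n$. Fix a symmetric Latin square $S$ of order $n$ on the symbol set $[n]$; for instance $S(x,y)=x+y \pmod n$. When $n$ is odd, this $S$ is also idempotent up to relabelling, since the diagonal $2x$ runs over all of $\mathbb{Z}_n$. We then form the $nm\times nm$ array $L$, indexed by pairs $(x,i)$ with $x\in[n]$ and $i\in[m]$, by setting
\[
L\bigl((x,i),(y,j)\bigr)=|S(x,y)\rangle\otimes A_{xy}(i,j)\in\mathcal{H}_n\otimes\mathcal{H}_m,
\]
where each $A_{xy}$ is a $QLS(m)$ still to be chosen.

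The choice of the blocks is as follows. The diagonal blocks $A_{xx}$ must be symmetric, and the off-diagonal blocks must satisfy $A_{yx}=A_{xy}^{T}$; with these two conditions $L$ is symmetric. We apply \cref{lemma:3.3} with $h=\binom{n}{2}$. This gives mutually disjoint $MCQLS(m)$s $B_{xy}$, one for each $x<y$, together with two $MCSQLS(m)$s $D_1,D_2$. We put $A_{xy}=B_{xy}$ and $A_{yx}=B_{xy}^T$ for $x<y$, and take every diagonal block equal to $D_1$.

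Next we verify the Latin property. Fix a block row $x$. For each symbol $s$ there is exactly one block in that row with $S(x,y)=s$. A row of $L$ through $(x,i)$ therefore consists of the vectors $|s\rangle\otimes v$, where for each $s$ the vector $v$ ranges over row $i$ of an orthonormal basis of $\mathcal{H}_m$. Tensoring these gives an orthonormal basis of $\mathcal{H}_{nm}$. Columns are handled the same way.

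For the cardinality, an entry of $L$ determines its symbol $s$ (up to phase, the vector $|s\rangle\otimes u$ determines both $s$ and $u$ up to phase). In the upper triangle of $L$, each off-diagonal block pair contributes $m^2$ distinct vectors, all distinct from every other block because the $B_{xy}$ are pairwise disjoint. Each diagonal block contributes $m(m+1)/2$ vectors, and these are distinguished across diagonal blocks by their tensor factor $|S(x,x)\rangle$. This works provided the diagonal symbols are distinct, or more generally provided that any two blocks carrying the same symbol use disjoint or different $QLS$s. In fact, distinctness only matters among blocks with the same symbol $s$. Such blocks share $|s\rangle$, so two identical entries $|s\rangle\otimes u=|s\rangle\otimes u'$ force $u=u'$. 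Hence it suffices that blocks with equal symbols are pairwise disjoint, apart from a block and its own transpose, which is counted only once. The main obstacle is the diagonal when $n$ is even: in $x+y \pmod n$ the diagonal symbols repeat, so two diagonal blocks can carry the same symbol, and each such pair requires two disjoint $MCSQLS(m)$s. This is exactly why \cref{lemma:3.3} supplies two of them. We will choose an even-order symmetric Latin square whose diagonal uses each symbol at most twice; $x+y \pmod n$ for even $n$ already does this, since $2x$ takes each even residue exactly twice. We assign $D_1$ and $D_2$ to the two diagonal cells carrying the same symbol. The total count is then $n\cdot\frac{m(m+1)}{2}+\binom n2 m^2=\frac{nm(nm+1)}{2}$, as required.

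For the idempotent statement with $n$ odd, we use $S(x,y)=(x+y)/2$ over $\mathbb{Z}_n$, which is idempotent. We take every diagonal block to be the given $MCISQLS(m)$ $D$, translated into a disjoint copy where necessary; here each symbol appears once on the diagonal, so a single copy suffices. For the off-diagonal blocks we use disjoint $MCQLS(m)$s arranged as above, with the disjointness from $D$ obtained via \cref{lemma:3.1} and the pigeonhole argument of \cref{lemma:3.3}. The diagonal of $L$ is then $\{|x\rangle\otimes D(i,i)\}$. Since the diagonal of $D$ is an orthonormal basis of $\mathcal{H}_m$, this set is an orthonormal basis of $\mathcal{H}_{nm}$, so $L$ is idempotent.
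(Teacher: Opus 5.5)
Your proposal is correct and takes essentially the same approach as the paper. The paper likewise tensors a classical symmetric Latin square blockwise with two disjoint $MCSQLS(m)$s on the diagonal, split so that repeated diagonal symbols receive different copies, and with pairwise disjoint $MCQLS(m)$s $B_{xy}$, $B_{xy}^T$ off the diagonal, giving the count $n\cdot\frac{m(m+1)}{2}+\binom n2 m^2=\frac{nm(nm+1)}{2}$. Your explicit squares ($x+y \pmod n$, which is half-idempotent up to relabelling when $n$ is even, and $(x+y)/2$ when $n$ is odd) and your use of the given $MCISQLS(m)$ together with the pigeonhole step of \cref{lemma:3.3} are concrete versions of the paper's cited symmetric (half-)idempotent squares and of its ``take $B_1$ idempotent'' step.
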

\begin{proof}
Let $A=(|a_{i,j}\rangle)$ be a classical $SQLS(n)$. If $n$ is odd, there exists a symmetric idempotent Latin square of order $n$, which means $a_{i,i}=i$. If $n$ is even, there exists a symmetric half-idempotent Latin square, which means $a_{i,i}=i$ when $i< \frac{n}{2}$, $a_{i,i}=i-\frac{n}{2}$ when $i\geq \frac{n}{2}$ \cite{S}.

By \cref{lemma:3.3}, let $B_1$, $B_2$ be two disjoint $MCSQLS(m)$s, $C_{i,j}$ be $\frac{n(n-1)}{2}$ $MCQLS(m)$s, for all $0<j<i\le n-1$ such that $B_1$, $B_2$ and $C_{i,j}$ for all $0<j<i\le n-1$ are pairwise disjoint. Then we construct a matrix of order $nm$, denoted by $L$. $L$ can be divided into $n^2$ blocks, each of size $m\times m$. The $(i,j)-th$ block, denoted by $L_{i,j}$, $i,j=0,1,\dots,n-1$, is defined as follows:
\begin{equation*}
L_{i,j}=\begin{cases}
|a_{i,j}\rangle \otimes B_1, & i=j,i< \lfloor\frac{n}{2} \rfloor, \\
|a_{i,j}\rangle \otimes B_2, & i=j,i\geq \lfloor\frac{n}{2} \rfloor, \\
|a_{i,j}\rangle \otimes C_{i,j}, &i>j, \\
|a_{i,j}\rangle \otimes C_{j,i}^T, &j>i.
\end{cases}
\end{equation*}

The diagonal blocks are symmetric since $B_1,B_2$ are symmetric. Then it suffices to show $L_{i,j}(k,l)$ and $L_{j,i}(l,k)$, $0<j<i\le n-1$, $k,l\in [m]$. $L_{i,j}(k,l)=|a_{i,j}\rangle\otimes C_{i,j}(k,l)$, $L_{j,i}(l,k)=|a_{j,i}\rangle\otimes C_{i,j}^T(l,k)=|a_{i,j}\rangle\otimes C_{i,j}(k,l)=L_{i,j}(k,l)$. Therefore $L$ is symmetric.

Then we prove $L$ is row-orthogonal. If two entries $\alpha$ and $\beta$ in the same row of $L$ are in different blocks $L_{i,j}$ and $L_{i,j'}$, $\alpha$ and $\beta$ are orthogonal since $|a_{ij}\rangle$ and $|a_{ij'}\rangle$ are orthogonal; if $\alpha$ and $\beta$ are in the same block, suppose $\alpha=|a_{i,j}\rangle\otimes u$, $\beta=|a_{i,j}\rangle\otimes v$, then $u$ and $v$ are in the same row of a $QLS(m)$. Since $u,v$ are orthogonal, $\alpha$ and $\beta$ are orthogonal. Therefore $L$ is row-orthogonal. By symmetry of $L$, $L$ is column-orthogonal. Consequently, $L$ is an $SQLS(nm)$.

$L$ is maximal cardinality is also clear. Suppose two different lower triangular entries $\alpha=|a_{i,j}\rangle\otimes u$, $\beta=|a_{i',j'}\rangle\otimes v$. If $|a_{i,j}\rangle\neq |a_{i',j'}\rangle$, then $\alpha$ and $\beta$ are distinct clearly. If $|a_{i,j}\rangle=|a_{i',j'}\rangle$, we can suppose $(i,j)\neq (i',j')$ because $u,v$ come from the same $QLS(m)$, while $B_1,B_2,C_{i,j}$, $0<j<i\le n-1$ are maximal cardinality, $u,v$ are distinct. Thus $\alpha$ and $\beta$ are distinct. If $(i,j)\neq (i',j')$,
\begin{enumerate}
\item[Case 1:] $i=j$, $i'=j'$. Because $B_1$ and $B_2$ are disjoint, two entries in main diagonal are distinct.
\item[Case 2:] $i\neq j$, $i'\neq j'$. Because $C_{i,j}$ and $C_{i',j'}$ are disjoint, two entries in off-diagonal are distinct.
\item[Case 3:] $i=j$, $i'\neq j'$. $B_1$ or $B_2$ is disjoint from $C_{i,j}$, $0<j<i\le n-1$, the entry in main diagonal is distinct from the entry in off-diagonal.
\end{enumerate}
Therefore $L$ is an $MCSQLS(nm)$.

Furthermore, Let $n\ge 3$ be odd, take $A$ idempotent and $B_1$ idempotent, and use
$|a_{i,i}\rangle\otimes B_1$ for every diagonal block, the other blocks being as above. $B_1$ and $C_{i,j}$, $0<i<j\le n-1$ are pairwise disjoint. $L$ is still an $MCSQLS(nm)$ similarly.

We only show the main diagonal of $L$ is a transversal, namely any two different entries $\alpha$ and $\beta$ in main diagonal are orthogonal.  If $\alpha$ and $\beta$ are in different blocks, since $L$ is idempotent, $\alpha$ and $\beta$ are orthogonal. Otherwise, $B$ is idempotent which guarantees they are also orthogonal.
\end{proof}

\section{Main result}

In this section, we prove our main result. 

\noindent \textbf{Proof of \cref{thm:1.1}:} There exists an $MCSQLS(n)$ for $n\in\{6,8,9,10,15,25\}$ where $8$ comes from \cref{lemma:2.2} and $\{6,9,10,15,25\}$ come from \cref{lemma:2.5}.
For all other values of $n\geq 6$, let $n=2^i3^j5^kp_1^{l_1}p_2^{l_2}\dots p_s^{l_s}$ where $p_1,p_2,\dots,p_s$ are primes not less than $7$, $i,j,k,l_1,l_2$, $\dots,l_s$ are nonnegative integers. We distinguish the following three cases.

\begin{enumerate}

\item[Case 1:] $i=j=k=0$. Then there exists $t$ with $l_t>0$. By \cref{thm:2.2,lemma:3.4}, we have done.

\item[Case 2:] Exactly two of $i,j,k$ are zero. If $i=j=0$, there exists an $MCSQLS(25)$, by \cref{lemma:3.4} with $m=25$, $n=5^{k-2}$, there exists an $MCSQLS(5^k)$ for $k\geq 2$. If $i=k=0$, there exists an $MCSQLS(9)$, thus there exists an $MCSQLS(3^j)$ for $j\geq 2$ by \cref{lemma:3.4} with $m=9$, $n=3^{j-2}$. If $j=k=0$, there exists an $MCSQLS(8)$, thus there exists an $MCSQLS(2^i)$ for $i\geq 3$ by \cref{lemma:3.4} with $m=8$, $n=2^{i-3}$.

\item[Case 3:] At most one of $i,j,k$ is zero. If $k=0$, there exists an $MCSQLS(6)$, thus there exists an $MCSQLS(2^i3^j)$ for $i,j\geq 1$ by \cref{lemma:3.4} with $m=6$, $n=2^{i-1}3^{j-1}$. If $j=0$, there exists an $MCSQLS(10)$, then there exists an $MCSQLS(2^i5^k)$ for $i,k\geq 1$ by \cref{lemma:3.4} with $m=10$, $n=2^{i-1}5^{k-1}$. If $i=0$, there exists an $MCSQLS(15)$, thus there exists an $MCSQLS(3^j5^k)$ for $j,k\geq 1$ by \cref{lemma:3.4} with $m=15$, $n=3^{j-1}5^{k-1}$.

\end{enumerate}

Combining Cases 1-3 and \cref{lemma:3.4} completes the proof.

\noindent \textbf{Proof of \cref{thm:1.2}:}
For any $n\equiv 2\pmod{4}$, $n\geq 6$, suppose $n=2k$ with $k\ge3$ odd. If $k$ is an odd prime number, \cref{thm:2p} gives an  $MCISQLS(n)$. If $k$ is an composite, suppose $k=k_1k_2$ with $k_1$ is an odd prime number and $k_2$ is an odd, an  $MCISQLS(2k_1)$ exists by \cref{thm:2p} then there exists an  $MCISQLS(n)$ by \cref{lemma:3.4} with $n=k_2$, $m=2k_1$.


\section*{Acknowledgements}
\noindent The authors are very grateful to Prof. Lie Zhu from Soochow University for his valuable suggestions.  H. Cao's research was supported by the National Natural Science Foundation of China (Grants No. 12471313 and No. 12071226).


\end{document}